\tikzset{mydescription/.style={anchor=center,fill=white}}
\theoremstyle{definition}
 \newtheorem{defi}{Definition}[section]
\theoremstyle{remark}
 \newtheorem{remark}[defi]{Remark}
 \newtheorem{question}[defi]{Question}
\theoremstyle{plain}
\newtheorem{prop}[defi]{Proposition}
\newtheorem{lemma}[defi]{Lemma}
\newtheorem*{acknow}{Acknowledgments}
\newcommand{\zz}{\mathbb{Z}}
\newcommand{\rr}{\mathbb{R}}
\newcommand{\cc}{\mathbb{C}}
\newcommand{\hh}{\mathbb{H}}
\newcommand{\Z}[1]{\zz_{#1}}
\newcommand{\abs}[1]{\left\vert #1 \right\vert}
\newcommand{\red}{/\!\!/}
\newcommand{\redpr}[1]{\times_{\!/\!/ #1}}
\newcommand{\A}{\mathscr{A}}
\newcommand{\M}{\mathscr{M}}
\newcommand{\Mg}{\mathscr{M}^\mathfrak{g}}
\newcommand{\g}{\mathfrak{g}}
\newcommand{\Bg}{B_\mathfrak{g}(\pi)}
\newcommand{\G}{\mathscr{G}}
\newcommand{\Gc}{\mathscr{G}^c}
\newcommand{\N}{\mathscr{N}}
\newcommand{\Nc}{\mathscr{N}^c}
\newcommand{\glag}{generalized Lagrangian correspondence}
\newcommand{\Lier}{\mathcal{L}ie_\rr}
\newcommand{\Liec}{\mathcal{L}ie_\cc}
\newcommand{\Symp}{\mathcal{S}ymp}
\newcommand{\Ham}{\mathcal{H}am}
\newcommand{\Hamhat}{\widehat{\mathcal{H}am}}
\newcommand{\qHam}{q\mathcal{H}am}
\newcommand{\Cob}{\mathcal{C}ob}
\newcommand{\cqfd}{\begin{flushright}
$\Box$
\end{flushright}}
\newcommand{\MW}{Manolescu and Woodward}
\newcommand{\WW}{Wehrheim and Woodward}
\title[A 2-category of Hamiltonian manifolds]{A two-category of Hamiltonian manifolds, and a (1+1+1) field theory}
\author{Guillem Cazassus}
\address{Mathematical institute,  University of Oxford, Oxford
OX2 6GG}
\email{g.cazassus@gmail.com}
\begin{document}

\begin{abstract}
We define an extended field theory in dimensions $1+1+1$ that takes the form of a ``quasi 2-functor'' with values in a strict 2-category $\Hamhat$, defined as the ``completion of a partial 2-category'' $\Ham$, notions which we define. Our construction extends Wehrheim and Woodward's Floer Field theory, and is inspired by Manolescu and Woodward's construction of symplectic instanton homology. It can be seen, in dimensions $1+1$, as a real analog of a construction by Moore and Tachikawa.

Our construction is motivated by instanton gauge theory in dimensions 3 and 4: we expect to promote $\Hamhat$ to a (sort of) 3-category via equivariant Lagrangian Floer homology, and extend our quasi 2-functor to dimension 4, via equivariant analogues of Donaldson polynomials.
\end{abstract}

\maketitle

\tableofcontents

\section{Introduction}
\label{sec:intro}

Donaldson polynomials are invariants of smooth 4-manifolds defined by counting solutions to the anti-self dual equation. They were introduced in \cite{Donaldsonpoly} for simply connected 4-manifolds with $b_2^+$ odd and strictly greater than one, and later extended to more general 4-manifolds in \cite{kronheimer1995embedded}. They contain a lot of information and are sensitive to the smooth structure, but are also very difficult to compute. A major challenge is to understand how they undergo cut and paste operations.

Instanton homology groups are associated to some 3-manifolds, and were first introduced by Floer as a categorification of the Casson invariant. They can also be used  to define relative Donaldson invariants of 4-manifolds with boundary. In \cite{BraamDonaldsonGluing}, such relative invariants were defined (more precisely, taking values in a variation of instanton homology defined by Fukaya), and in some particular cases (avoiding reducible {connections}), some glueing formulas were obtained, making these invariants similar with a (3+1)-Topological Quantum Field Theory (TQFT). Still, whether these invariants can be interpreted as a TQFT remains an open difficult problem. See  \cite{mrowka1988local,MorganMrowkaRuberman} for work in this direction.

More generally, one might want to recast these invariants as an "extended TQFT" as in \cite{BaezDolan_cob_hyp,Lurie_cob_hyp}.  If $n$ and $d$ are integers, denote by $\Cob_{n+1+\cdots +1}$ the (weak) $d$-category with objects closed $n$-manifolds, 1-morphisms $(n+1)$-manifolds with boundary, 2-morphisms $(n+2)$-manifolds with corners, ... $d$-morphisms $(n+d)$-manifolds with corners.  An extended TQFT in this setting would (roughly) be a symmetric monoidal 4-functor from $\Cob_{0+1+1+1+1}$ (or a variation that incorporates cohomology classes) to some 4-category, such that this 4-functor applied to a closed 4-manifold (seen as a 4-morphism) essentially corresponds to its Donaldson polynomial.

In this direction, building on the Atiyah-Floer conjecture, \WW\ proposed such a functorial behaviour in dimensions 2+1+1. For this purpose they defined a 2-category that could serve as a target for a 2-functor from  $\Cob_{2+1+1}$. Their 2-category is inspired by Weinstein's symplectic category: objects are (some) symplectic manifolds, 1-morphisms are (equivalence classes of sequences of) Lagrangian correspondences, and 2-morphisms are "quilted Floer homology" classes. Unfortunately, as the symplectic manifolds and Lagrangian correspondences are singular,  Floer homology in this setting is currently undefined. Nonetheless, this construction has been implemented in slightly different settings, using nontrivial bundles, see \cite{WWfft,WWffttangles}.

In a different direction, in order to provide a mathematical definition of theories of class S appearing in string theory, Moore and Tachikawa \cite{MooreTachikawa} predicted the existence of {TQFTs} in dimensions 1+1 associating complex algebraic groups to closed 1-manifolds, and  holomorphic symplectic varieties with Hamiltonian action to 2-manifolds with boundary.

We aim to extend the functorial behaviour of Donaldson's polynomials down to dimension 1, in a way that brings together Wehrheim-Woodward and Moore-Tachikawa's constructions.  Namely, we aim to build a (symmetric monoidal) 3-category $\Ham$, and a  (symmetric monoidal) 3-functor $\Phi \colon \Cob_{1+1+1+1}\to \Ham$ such that:
\begin{itemize}
\item When restricted to the underlying 1-category $\Cob_{1+1}$, it is similar with Moore-Tachikawa's theories (see Remark~\ref{rem:MooreTachikawa} for a brief discussion about the differences).
\item When restricted to $\Cob_{2+1+1}$, seen as the endomorphism 2-category of the empty set $End(\emptyset)$ in $\Cob_{1+1+1+1}$, corresponds to \WW's Floer field theory functor.
\end{itemize}

In this paper we  build such a theory in dimension $1+1+1$. That is, we  build a 2-category $\Hamhat$, and a quasi 2-functor $\Phi \colon \Cob_{1+1+1}\to \Hamhat$, that should be  monoidal and symmetric for a monoidal and  symmetric structure on $\Hamhat$ that we will construct in \cite{monoidal}. We will explain in Section~\ref{ssec:dim_four} how we expect to extend such a functor to dimension four, by using equivariant Lagrangian Floer homology.

Our construction is inspired by  \MW 's {work} \cite{MW}, who use (open subsets of) Huebschmann and Jeffrey's extended moduli space in order to define a symplectic side of the Atiyah-Floer conjecture.

These moduli spaces satisfy a "gluing (almost) equals reduction" principle (see Proposition~\ref{prop:gluing_equals_reduction}): if $\Sigma$ has $k$ boundary components, these moduli spaces $\N(\Sigma)$ carry an $SU(2)^k$-Hamiltonian action (corresponding to constant gauge transformations on each boundary component), and if $\Sigma$ and $S$ (have {respectively} $k$ and $l$ boundary components and) are glued along $m$ components, then

\[ \N(\Sigma \cup S)\setminus C = (\N(\Sigma) \times \N(S) )\red (SU(2))^m,\]
where $C$ is a finite union of codimension 3 submanifolds.

{An important feature of the moduli spaces $\N(\Sigma)$ is that, as opposed to the Atiyah-Bott moduli space of a closed surface, if $\Sigma$ has no closed component, not only is $\N(\Sigma)$ smooth, but also \MW\ showed that it is actually possible to define Floer homology in these spaces.}

The above gluing equals reduction principle  motivates the definition of the composition of 1-morphisms (corresponding to the spaces $\N(\Sigma)$) in $\Ham$. Let us first give a naive definition of what $\Ham$ could be:

\begin{itemize}
\item objects are Lie groups,
\item 1-morphisms {from} $G$ to $G'$ are $(G\times G')$-Hamiltonian manifolds,
\item 2-morphisms: if both $M$ and $N$ are 1-morphisms {from} $G$ to $G'$, a 2-morphism $L$ from $M$ to $N$ is a $(G, G')$-Lagrangian correspondence, that is a $(G\times G')$-Lagrangian in $M^- \times N$, (see Definition~\ref{def:G-lagr}),
 \[
\xymatrix{
   G \rtwocell^{M}_{N}{L} & G'
},
\]
\item (horizontal) composition of 1-morphisms is defined as the symplectic quotient of the cartesian product: 
if $M$ is a 1-morphism form $G$ to $G'$ and $N$ a 1-morphisms form $G'$ to $G''$, define 
\[ 
M\circ_{h}^{1} N = (M\times N)\red G' ,
\]
where $G'$ acts diagonally on $M\times N$, with moment map $\mu_{diag}$ defined by 
\[ \mu_{diag} (m,n) = \mu_M(m) + \mu_N(n). \]

Both actions of $G$ and $G''$ descend to this quotient, and endow $M\circ_{h}^{1} N$ with a $G\times G'$-Hamiltonian action.
\item vertical composition of 2-morphisms is defined as {the} composition of correspondences in the usual way.
\item horizontal composition of 2-morphisms is defined as {the} quotient of the product of correspondences.
\end{itemize}

Unfortunately, this definition faces similar problems of Weinstein's symplectic category: compositions are not always well-defined. For example, $M\circ_{h}^{1} N$ can be singular if 0 is not a regular value of the moment map $\mu_{diag}$. {For this reason, the above definition is only a naive one.}

One way to remedy this problem would be to enlarge the class of geometric objects we consider (by analogy with schemes or stacks in algebraic geometry), so that these compositions are always defined. It is likely that shifted symplectic geometry could be used in order to do that. However, it would then be less obvious to define Floer homology inside such more complicated spaces. For this reason, we follow the approach of \WW \cite{WWfft}: in order to turn Weinstein's symplectic category to a honest category, they define a category where morphisms consist in equivalence classes of sequences of Lagrangian correspondences, modulo embedded compositions. Composition is then defined by concatenation (and agrees with composition of correspondences, when these are embedded). We take a similar approach at the 2-category level: we first define a ``partial 2-category'' $\Ham$, by which we essentially mean that the various kinds of compositions are only partially defined. The actual definition of a ``partial 2-category'' is slightly more complicated than that, in order to be able to produce a strict 2-category as follows (in particular, the 2-morphisms will be between \emph{sequences of} 1-morphisms). We will refer to the morphisms of $\Ham$ as simple morphisms.
 
And then, by an algebraic procedure that we call completion, we turn $\Ham$ to a strict 2-category $\Hamhat$: 
\begin{itemize}
\item $\Hamhat$ has the same objects as $\Ham$, 
\item 1-morphisms  consist in equivalence classes of sequences of simple 2-morphisms,
\item and 2-morphisms consist in equivalence classes of diagrams of simple 2-morphisms.
\end{itemize}
A subtlety  arise when defining the 2-morphism spaces: if $u,v\in hom^1(x,y)$ are 1-morphisms, the set $shom^2(u,v)$ should be independent in the choice of representatives of $u$ and $v$. For this reason, we  introduce a ``diagram axiom'' that ensures this independence of representatives, {and prove that it is satisfied in Section~\ref{ssec:diag_axiom}.}

To define our (1+1+1)-Field theory, which a priori would take the form of a 2-functor $\Cob_{1+1+1}\to \Hamhat$, we follow the same strategy of \WW : we do not associate morphisms of $\Ham$ to any morphism of $\Cob_{1+1+1}$, but only to ones that are elementary in the sense of Cerf theory, for which we know their associated moduli spaces are well-behaved (i.e. smooth, essentially). This results in a 2-functor $\Cob^{elem}_{1+1+1}\to \Hamhat$, where morphisms in $\Cob^{elem}_{1+1+1}$ are cobordisms endowed with decompositions into elementary pieces.

And then we check independence of the decompositions, using Cerf theory. If the above mentionned glueing almost equals reduction formula for $\N(\Sigma)$ would have been a strict glueing equals reduction (i.e. without the submanifolds $C$), we would have obtained a 2-functor $\Cob_{1+1+1}\to \Hamhat$. Instead, we only get a ``quasi  2-functor'', where quasi essentially refers to the presence of $C$. We believe that this shouldn't be a serious issue, as the rigid $J$-holomorphic curves involved in defining Floer homology are generically disjoint from $C$.

Notice that in \WW 's theory (in the untwisted setting at least), the main difficulties come from the fact that the moduli spaces are singular. In going to dimensions (1+1+1), this problem disappears, and the difficulty becomes checking that the resulting Floer homology groups are independent on the decompositions of surfaces. We expect that any reasonable equivariant Floer theory will be able to overcome these, with suitable ``Kirwan morphisms'' relating equivariant Floer homology to nonequivariant Floer homology of the symplectic quotient (when the latter is smooth).

In the same manner that the relevance of Fukaya categories go beyond the initial setting of their discovery (i.e. Dehn surgery in Instanton Homology \cite{BraamDonaldsonSurgery}), we expect that $\Ham$ should be an important object in symplectic topology. In the last section, we outline some possible research directions, linking to other gauge theories (Seiberg-Witten, Donaldson-Thomas), in which $\Ham$ could play a useful role.

At the 1-category level, a category similar to $\Ham$ has been introduced by Moore and Tachikawa \cite{MooreTachikawa} in the holomorphic setting, and could serve as a target for $SL(2, \cc)$ analogues of instanton homology, as introduced in \cite{AbouzaidManolescu,CoteManolescu}.

We also introduce two other partial 2-categories $\Lier$ and $\Liec$ of independent interest, similar in nature to $\Ham$ but belonging respectively to the smooth and holomorphic categories (as opposed to symplectic) that can be seen as toy models for $\Ham$, and briefly discuss some relations these partial 2-categories share with each other.

\vspace{.3cm}
\paragraph{\underline{Organization of the paper}}
In Section~\ref{sec:prelim} we set up an algebraic framework for the categories we are interested in: we introduce the notion of partial 2-categories, and construct their completion. 
In Section~\ref{sec:defHamLierLiec} we define the partial 2-category $\Ham$, as well as two analogous categories $\Lier$ and $\Liec$.
In Section~\ref{sec:moduli_spaces} we introduce the moduli spaces that are involved in our construction.
In Section~\ref{sec:constr} we construct the quasi-2-functor.
In Section~\ref{sec:future_dir} we outline some future directions that motivates the constructions in this paper.

\begin{acknow}
We thank Andriy Haydys, Dominic Joyce, Paul Kirk, Artem Kotelskiy, Jason Lotay, Andy Manion, Ciprian Manolescu, Catherine Meusburger, Mike Miller, Nicolas Orantin, Raphael Rouquier, Matt Stoffregen, Chris Woodward, Guangbo Xu and Wai-Kit Yeung for helpful conversations, in particular we thank Guangbo Xu for suggesting the relation with Seiberg-Witten theory of Section~\ref{ssec:rel_SW}. We also thank Semon Rezchikov for pointing out the work of Moore and Tachikawa, and Gregory Moore for pointing out Haydys work.
\end{acknow}

\section{Partial 2-categories}
\label{sec:prelim}

\subsection{Definition}
\label{ssec:def}

We now define partial two-categories, as two categories where the compositions are only partially defined. These can be seen as 2-category analogs of Wehrheim's categories with Cerf decompositions \cite{Wehrheimphilo}. In the next section we  associate a strict 2-category to such partial 2-categories. We denote the sets of morphisms in a partial category $shom^k$, and we  introduce other sets $\underline{hom}^k$ of "representatives of general morphisms", of which the sets of (general) morphisms $hom^k$ of the completion will be a quotient.

We start by defining a partial two-precategory. A partial two-category will satisfy an additional axiom that will be stated later in Definition~\ref{def:partial_cat}.
\vspace*{.5cm}

\textbf{Warning:} Our conventions for compositions differ from the standard one for composition of maps: if $\varphi\colon x\to y$ and $\psi\colon y\to z$ are morphisms, we denote their composition $\varphi \circ \psi\colon x\to z$.

We apologize for the length of the following definition. The opposite operations correspond to changing orientations of cobordisms, and can be safely ignored in a first reading.

\begin{defi}\label{def:partial_precat} 
A  \emph{partial two-precategory} $\mathcal{C}$ consists in:

\begin{itemize}
\item A class of \emph{objects} $Ob_{\mathcal{C}}$.

\item An involution $x\mapsto x^{op}$ on objects. $x^{op}$ is called the opposite object.

\item For each pair of objects $x,y$, a class $shom^1(x,y)$ of \emph{simple 1-morphisms}.

\item For each pair of objects $x,y$, an involutive map  $shom^1(x,y) \to shom^1(y,x)$, $\varphi \mapsto \varphi^T$, which we call \emph{adjunction}.

\item An opposite involution $shom^1(x,y) \to shom^1(x^{op},y^{op})$, $\varphi \mapsto \varphi^{op}$.

\item A \emph{partial horizontal composition}: for each triple $x,y,z$ of objects, a subset of \emph{composable 1-morphisms} 
\[comp^1(x,y,z) \subset shom^1(x,y) \times shom^1(y,z),\]
and a composition map \[ \circ_h^1 \colon comp^1(x,y,z) \to shom^1(x,z),\]
that is compatible with adjunction: if $(\varphi,\psi)\in comp^1(x,y,z)$, then $(\psi^T,\varphi^T)\in comp^1(z,y,x)$ and $(\varphi \circ_h^1\psi)^T = \psi^T\circ_h^1 \varphi^T$.

For two objects $x,y$ we define the class of \emph{representatives of general 1-morphisms} $\underline{hom}^1(x,y)$ to be the class of finite (and possibly empty if $x=y$) sequences \[\underline{\varphi} = \left(\xymatrix{ x \ar[r]^{\varphi_1} &  x_1 \ar[r]^{\varphi_2} & \cdots\ar[r]^{\varphi_k} & y   } \right),\]
with $\varphi_i\in shom^1(x_{i-1},x_{i})$, $x_0 = x$ and $x_k = y$.

Define adjunction $\underline{hom}^1(x,y) \to \underline{hom}^1(y,x)$ by: 
\[
\underline{\varphi}^T =\left(\xymatrix{ y \ar[r]^{\varphi_k^T} &  x_{k-1} \ar[r]^{\varphi_{k-1}^T} & \cdots\ar[r]^{\varphi_1^T} & x   } \right).
\]

If $\underline{\psi}\in \underline{hom}^1(x,y)$ and $\underline{\varphi}\in \underline{hom}^1(y,z)$ are such sequences, we denote $ \underline{\psi}\sharp_h^1 \underline{\varphi}\in \underline{hom}^1(x,z)$ their concatenation.

\item For any $\underline{\varphi}, \underline{\psi} \in \underline{hom}^1(x,y)$, a class of \emph{simple 2-morphisms}, denoted $shom^2(\underline{\varphi}, \underline{\psi})$.
\item Identification 2-morphisms for horizontal composition of elementary 1-morphisms: if $(\varphi, \psi) \in comp^1(x,y,z)$, {then there is a corresponding identification 2-morphism denoted:}
\[ 
I_y (\varphi, \psi)\in shom^2( (\varphi, \psi) , \varphi \circ_h^1 \psi ).
\]

\item Cyclicity: for any cyclic sequence $\underline{\varphi} \sharp_h^1 \underline{\chi} \sharp_h^1  \underline{\rho}^T  \sharp_h^1  \underline{\psi} ^T \in \underline{hom}(x,x) $, coherent\footnote{in the sense that the composition of two such identifications corresponds to the identification associated with the new reordering.}  identifications  \[shom^2(\underline{\varphi} \sharp_h^1 \underline{\chi}, \underline{\psi}  \sharp_h^1   \underline{\rho} ) \simeq shom^2(  \underline{\psi}^T \sharp_h^1 \underline{\varphi}    ,  \underline{\rho} \sharp_h^1 \underline{\chi}^T   ).\]

\[
\xymatrix{  &  y \ar[rd]^{\underline{\chi}} &  &  &  &  y  &   & \\
x  \ar[ru]^{\underline{\varphi}} \ar[rd]_{\underline{\psi}} &  \Downarrow  &  z & \simeq &  x  \ar[ru]^{\underline{\varphi}} &  \Rightarrow  &  z. \ar[lu]_{\underline{\chi}^T} \\
&  t \ar[ru]_{\underline{\rho}}&  &  &  &  t \ar[ru]_{\underline{\rho}}  \ar[lu]^{\underline{\psi}^T}&   }  
\] 

In other words, the set $shom^2(\underline{\varphi} , \underline{\psi})$ only  depends on the cyclic sequence $\underline{\varphi} \sharp^1_h \underline{\psi}^T$.

\item Opposites: $shom^2(\underline{\varphi}, \underline{\psi}) \to shom^2(\underline{\varphi}^{op}, \underline{\psi}^{op})$,  $A\mapsto A^{op}$.

\item Adjunctions: involution $shom^2(\underline{\varphi}, \underline{\psi}) \to shom^2(\underline{\psi},\underline{\varphi})$,  $A\mapsto A^T$.

We require these involutions to be compatible with the cyclicity identifications, in the sense that the following diagrams, where horizontal arrows are adjunctions and vertical arrows are cyclic identifications, should commute:

\[\xymatrix{  shom^2(\underline{\varphi} \sharp_h^1 \underline{\chi}, \underline{\psi}  \sharp_h^1   \underline{\rho} )  \ar[r]\ar[d] &  shom^2(\underline{\psi}  \sharp_h^1   \underline{\rho}, \underline{\varphi} \sharp_h^1 \underline{\chi} )  \ar[d]   \\ shom^2(  \underline{\psi}^T \sharp_h^1 \underline{\varphi}    ,  \underline{\rho} \sharp_h^1 \underline{\chi}^T   )  \ar[r]    & shom^2(  \underline{\rho} \sharp_h^1 \underline{\chi}^T ,  \underline{\psi}^T \sharp_h^1 \underline{\varphi}   ).   }\]

\item A \emph{partial vertical composition}: for each triple $\underline{\varphi},\underline{\chi},\underline{\psi}$  in $\underline{hom}^1(x,y)$, a subset of \emph{composable 2-morphisms} 
\[comp^2(\underline{\varphi},\underline{\chi},\underline{\psi}) \subset shom^2(\underline{\varphi},\underline{\chi}) \times shom^2(\underline{\chi},\underline{\psi}),\]
and a composition map \[ \circ_v^2 \colon comp^2(\underline{\varphi},\underline{\chi},\underline{\psi}) \to shom^2(\underline{\varphi},\underline{\psi}),\]
that is compatible with adjunction: if $(A,B)\in comp^2(\underline{\varphi},\underline{\chi},\underline{\psi})$, then $(B^T,A^T)\in comp^1(\underline{\psi},\underline{\chi},\underline{\varphi})$ and $(A \circ_h^1 B)^T = B^T\circ_h^1 A^T$.

If $\varphi\colon x \to y$ and $\psi\colon y \to z$ are composable 1-morphisms, we require that $I_y(\varphi, \psi)$ can be vertically composed to the right and left with any other adjacent morphism.

\end{itemize}

\end{defi}

We now define diagrams and concatenation of diagrams, which are involved in the completion.

\begin{remark} Notice that we don't have a horizontal composition of 2-morphisms in this definition. Such a composition will be defined only after completion.
\end{remark}

\begin{defi}\label{def:diagrams} Fix a partial two-precategory $\mathcal{C}$,
\begin{itemize}
\item (Representatives of general 2-morphisms) Let $\underline{\varphi}, \underline{\psi} \in \underline{hom}^1(x,y)$, define the set of representatives of general 2-morphisms $\underline{hom}^2(\underline{\varphi}, \underline{\psi})$ as the set of planar, simply connected, polygonal diagrams of simple 2-morphisms from $\underline{\varphi}$ to $\underline{\psi}$: vertices are objects, edges are simple 1-morphisms, and faces simple 2-morphisms of $\mathcal{C}$. An example of such a diagram is Diagram~\ref{diag:diagram_gen_2morphism} below.

\begin{equation}
\label{diag:diagram_gen_2morphism}
\begin{tikzcd}
 &  .\ar[r] &  . \ar[r] \ar[rrd, bend right=20, {name=A}] \ar[Rightarrow,dd, start anchor={[xshift=1.5ex, yshift=-2ex]}, end anchor={[xshift=-1ex, yshift=3ex]}] & .\ar[rd,bend left=20] 
  \ar[Rightarrow,d, start anchor={[xshift=1ex, yshift=-1ex]}, end anchor={[xshift=0ex, yshift=1.5ex]}] \ar[rd,bend left=20]
  & & \\
x\ar[ru, bend left=20]  \ar[r] \ar[rd,bend right=20] &   . \ar[Rightarrow,d, start anchor={[xshift=1ex, yshift=-1ex]}, end anchor={[xshift=-1ex, yshift=1ex]}] \ar[rd,bend left=20] &    &  \ & . \ar[r] & y\\
 &  . \ar[r]&  . \ar[r] & .\ar[ru, bend right=20]  & & \\
\end{tikzcd}
\end{equation}

Such diagrams may contain no simple 2-morphisms: if $\underline{\varphi} = \underline{\psi}$, $\mathcal{D} = \lbrace \underline{\varphi} \rbrace$ is a diagram in $\underline{hom}^2(\underline{\varphi}, \underline{\psi})$. In particular if $\underline{\varphi}\in \underline{hom}^1(x,x)$ is the empty sequence, $\mathcal{D} = \lbrace x\rbrace $ is also a diagram in $\underline{hom}^2(\underline{\varphi}, \underline{\psi})$.

\item (Vertical concatenation) Let $ \mathcal{C} \in \underline{hom}^2( \underline{\varphi}, \underline{\psi})$ and  $ \mathcal{D} \in \underline{hom}^2( \underline{\psi}, \underline{\chi})$, denote \[\mathcal{C}\sharp_v^2 \mathcal{D} \in \underline{hom}^2( \underline{\varphi}, \underline{\chi}) \] their vertical concatenation, obtained by gluing the two diagrams along $\underline{\psi}$.

\item (Horizontal concatenation) Let $ \mathcal{C} \in \underline{hom}^2( \underline{\varphi}, \underline{\psi})$ and  $ \mathcal{D} \in \underline{hom}^2( \underline{\varphi} ', \underline{\psi}')$, denote \[\mathcal{C}\sharp_h^2 \mathcal{D} \in \underline{hom}^2( \underline{\varphi}\sharp_h^1 \underline{\varphi}'  , \underline{\psi} \sharp_h^1 \underline{\psi} ' )\] their vertical concatenation, obtained by gluing the two diagrams along the common target $y$ of $\underline{\varphi}$ and $\underline{\psi}${, which is also the source of $\underline{\varphi}'$ and $\underline{\psi}'$}.

\end{itemize}
\end{defi}

One can see that the concatenations $\sharp_h^1$, $\sharp_h^2$ and $\sharp_v^2$ are associative in the strongest possible sense, and make the set of objects, $\underline{hom}^1$ and $\underline{hom}^2$ into a strict 2-category $\underline{\mathcal{C}}$, which we  refer to as the \emph{pre-completion} of $\mathcal{C}$.

\begin{defi}\label{def:comp_decomp}
Let $ \underline{\varphi} = (\varphi_0, {\ldots} , \varphi_n) \in \underline{hom}^1(x,y)$, we say that $ \underline{\varphi}'$ is a \emph{composition} of $ \underline{\varphi}$ and that  $ \underline{\varphi}$ is a \emph{decomposition} of $ \underline{\varphi}'$  if for some index $i$, $\varphi_i$ and $\varphi_{i+1}$ are composable, and $ \underline{\varphi}' = ({\ldots},\varphi_{i-1}, \varphi_i \circ_h^1  \varphi_{i+1} , \varphi_{i+2}, {\ldots} )$.
\end{defi}

\begin{defi}\label{def:partial_cat} A partial 2-precategory is a partial 2-category if the following axiom holds:

\vspace*{.3cm}
\textbf{(Diagram axiom)} Let $\underline{\varphi}_0$, $\underline{\varphi}_1$, ... , $\underline{\varphi}_k$ be a sequence of representatives of general 1-morphisms in $\underline{hom}^1(x,y)$ such that for each $i$, $\underline{\varphi}_{i+1}$ is either a composition or a decomposition of $\underline{\varphi}_i$ in the sense of Definition~\ref{def:comp_decomp}. To such a sequence is associated a diagram $\mathcal{D}\in \underline{hom}^2(\underline{\varphi}_0, \underline{\varphi}_k)$ given by patching altogether all the identification 2-morphisms or their adjoints arising from the compositions/decompositions.

The axiom requires that for any such sequence with $\underline{\varphi}_k = \underline{\varphi}_0$, the diagram $\mathcal{D}$ is an identity for $\underline{\varphi}_0$ in the following sense: for any $L\in shom^2(\underline{\psi},\underline{\varphi}_0)$, using cyclicity of simple 2-morphisms, $L$ can be composed successively with all the identification 2-morphisms or their adjoints, call $L \circ_v^2 \mathcal{D}$ the resulting 2-morphism in $shom^2(\underline{\psi},\underline{\varphi}_0)$. The diagram $\mathcal{D}$ being an identity means that  $L \circ_v^2 \mathcal{D} = L$ for any such  $\underline{\psi}$ and $L$, and also $\mathcal{D} \circ_v^2 L = L$
 for any $L\in shom^2(\underline{\varphi}_0, \underline{\psi})$, with $\mathcal{D} \circ_v^2 L$ defined analogously.
\end{defi}

\begin{remark}(Consequences of the definition)
It follows from the diagram axiom that for any composable $(\varphi,\psi)\in comp^1(x,y,z)$,  $I_y(\varphi, \psi)$ and its adjoint are inverses, in the sense that  $I_y(\varphi, \psi) \circ_v^2 I_y(\varphi, \psi)^T$ and  $I_y(\varphi, \psi)^T \circ_v^2 I_y(\varphi, \psi)$ are identities for $(\varphi, \psi)$ (resp. for $\varphi \circ^1_h \psi$) with respect to vertical composition.

Partial associativity of simple 1-morphisms also follows from this axiom: whenever all the compositions appearing are defined, one has  $(\varphi\circ_h^1 \chi)\circ_h^1 \psi = \varphi \circ_h^1 (\chi\circ_h^1 \psi ) $.
\end{remark}

\subsection{Completion}
\label{ssec:completion}
We now describe how to "complete" a partial 2-category $\mathcal{C}$ to a strict 2-category. Loosely speaking, one concatenates morphisms when they cannot be composed, as in \WW's construction of the symplectic category $\Symp^\#$ \cite{WWfft}.

\begin{defi}(Completion of a partial 2-category)\label{def:completion} 
Let $\mathcal{C}$ be a partial 2-category. The following construction defines a strict 2-category $\widehat{\mathcal{C}}$, called completion of $\mathcal{C}$.

\begin{itemize}
\item Objects of $\widehat{\mathcal{C}}$ are the same as the objects of $\mathcal{C}$.
\item Given two object $x$ and $y$, the set of $1$-morphisms $hom^1(x,y)$ is defined as the quotient of $\underline{hom}^1(x,y)$ by the relation generated by compositions: we identify $\underline{\varphi}$ and  $\underline{\varphi}'$ if $\underline{\varphi}'$ is a composition of $\underline{\varphi}$ in the sense of Definition~\ref{def:comp_decomp}. Moreover, if $\varphi\in shom^1(x,x)$ is an identity for $x$ in the sense that it can be composed to the left and right with any adjacent simple 1-morphism $\psi$, and $\varphi\circ^1_h \psi = \psi$ or $\psi\circ^1_h \varphi = \psi$; then we identify such an identity with the empty sequence.

\item To define the spaces of 2-morphisms we first define a set  $hom^2(\underline{\varphi}, \underline{\psi})$ for representatives $\underline{\varphi}, \underline{\psi}\in \underline{hom}^1(x,y)$, and use the diagram axiom to define $hom^2([\underline{\varphi}], [\underline{\psi}])$, with $[\underline{\varphi}], [\underline{\psi}]$ the equivalence classes in $hom^1(x,y)$.

Let $hom^2(\underline{\varphi}, \underline{\psi})$ be the quotient of $\underline{hom}^2(\underline{\varphi}, \underline{\psi})$ by the following relation: if $\mathcal{D}$ is a diagram such that two faces $A$ and $B$ have a connected intersection $\underline{\chi}\in \underline{hom}^1(x,y)$, by cyclicity we can assume that $A\in shom^2(\underline{\alpha},\underline{\chi})$ and $B\in shom^2(\underline{\chi},\underline{\beta})$, for some $\underline{\alpha}$ and $\underline{\beta}$. If $A$ and $B$ are composable, then we identify $\mathcal{D}$ with the diagram obtained by removing the edge $\underline{\chi}$, and merging the two faces $A$ and $B$ to a single one $A\circ_v^2 B$. We also identify the empty diagrams with identities for $\circ^2_v$. These identifications generate the equivalence relation.

We now define $hom^2([\underline{\varphi}], [\underline{\psi}])$: pick two representatives $\underline{\varphi}, \underline{\varphi}' \in [\underline{\varphi}]$, which by assumption can be joined by a sequence of compositions and decompositions $\underline{\varphi}_0 = \underline{\varphi}$, $\underline{\varphi}_1$, ... , $\underline{\varphi}_k = \underline{\varphi}'$. Such a sequence might not be unique, pick any other such sequence $\tilde{\underline{\varphi}}_0 = \underline{\varphi}$, $\tilde{\underline{\varphi}}_1$, ... , $\tilde{\underline{\varphi}}_l = \underline{\varphi}'$. To these two sequences are associated two diagrams $\mathcal{D}$, $\tilde{\mathcal{D}}$  of identification 2-morphisms, and vertical concatenation defines two maps
\begin{align*}
 m&\colon hom^2(\underline{\varphi}, \underline{\psi})\to hom^2(\underline{\varphi}', \underline{\psi}),\ [\mathcal{A}]\mapsto [\mathcal{D}\sharp^2_v \mathcal{A}], \\
\tilde{m}&\colon hom^2(\underline{\varphi}, \underline{\psi})\to hom^2(\underline{\varphi}', \underline{\psi}),\ [\mathcal{A}]\mapsto [\mathcal{D}'\sharp^2_v \mathcal{A}]. \\
\end{align*}

The diagram axiom applied to the sequence \[\underline{\varphi}_0 , \underline{\varphi}_1, {\ldots} , \underline{\varphi}_k, \underline{\varphi}_{k-1}, {\ldots} , \underline{\varphi}_0\] shows that the map $m$ is invertible, and that its inverse is given by $[\mathcal{A}]\mapsto [\mathcal{D}^T\sharp^2_v \mathcal{A}]$.

Applying now the diagram axiom to the sequence \[\tilde{\underline{\varphi}}_0 , \tilde{\underline{\varphi}}_1, {\ldots} , \tilde{\underline{\varphi}}_l, \underline{\varphi}_{k-1}, {\ldots} , \underline{\varphi}_0\] shows that the map $\tilde{m} \circ m^{-1}$ is the identity. In other words, for any pair of representatives $\underline{\varphi}, \underline{\varphi}' \in [\underline{\varphi}]$, the sets $hom^2(\underline{\varphi}, \underline{\psi})$ and $hom^2(\underline{\varphi}', \underline{\psi})$ are canonically identified. One can similarly prove that two equivalent choices for $\underline{\psi}$ induce canonical identifications. It follows that $hom^2(\underline{\varphi}, \underline{\psi})$ only depends on the classes $[\underline{\varphi}]$ and $[\underline{\psi}])$, and can be denoted $hom^2([\underline{\varphi}], [\underline{\psi}])$.

\item Identities. Let $x$ be an object, the identity 1-morphism associated to $x$ is defined as the class of the empty sequence.

Let $[\underline{\varphi}]$ be a 1-morphism, the identity 2-morphism is defined as the class of the diagram with no simple 2-morphisms, consisting only in $\underline{\varphi}$.

\item The three concatenations $\sharp_h^1$,  $\sharp_h^2$,  $\sharp_v^2$  pass to the quotient and define respectively  composition maps:
\begin{align*}
\circ_h^1 &\colon hom^1(x,y)\times hom^1(y,z)\to hom^1(x,z), \\
\circ_h^2 &\colon hom^2([\underline{\varphi}],[\underline{\psi}])\times hom^2([\underline{\varphi}'],[\underline{\psi}'])\to hom^2([\underline{\varphi}] \circ_h^1 [\underline{\varphi}'],[\underline{\psi}] \circ_h^1 [\underline{\psi}'] ), \\
\circ_v^2 &\colon hom^2([\underline{\varphi}],[\underline{\chi}])\times hom^2([\underline{\chi}],[\underline{\psi}])\to hom^2([\underline{\varphi}],[\underline{\psi}]), \\
\end{align*}
that satisfy the associativity properties of a strict 2-category.
\end{itemize}

\end{defi}

\begin{remark}(Self-criticism of the construction of completion)
\label{rem:self_criticism}
In the equivalence relation on $\underline{hom}^2(\underline{\varphi}, \underline{\psi})$ we only consider the case where the intersection of the two faces $A$ and $B$ is connected, but it can happen that faces intersect along a disconnected set of edges. If each of the corresponding compositions are allowable, it would be natural to also allow such identifications, however that would lead to more general notions of simple two morphisms in the definition of a partial 2-category, as the corresponding faces might not be polygons anymore, but non-simply connected regions of the plane. We won't do that in this paper, for sake of simplicity. 
\end{remark}

\begin{remark}(Completion as a solution to a universal problem)\label{rem:univ_pb_completion}
There is probably a more intrinsic way of defining completion as a solution to a universal problem. After having a suitable definition of a partial 2-functor, a completion of $\mathcal{C}$ could consist in a pair $(\widehat{\mathcal{C}},f)$ of a strict 2-category $\widehat{\mathcal{C}}$ together with a ``partial 2-functor'' $f\colon \mathcal{C} \to \widehat{\mathcal{C}}$ such that any other partial 2-functor from $\mathcal{C}$ to any other strict 2-category factors through $f$ in an essentially unique way. It would be interesting to work out such a definition in more detail, and compare it to the definition we give, or the possibly more natural one we just alluded in Remark~\ref{rem:self_criticism}.
\end{remark}

\section{Definition of $\Ham$, $\Lier$ and $\Liec$}
\label{sec:defHamLierLiec}

We shall now define the partial 2-category $\Ham$. Although this category will be our main object of interest, we also introduce two simpler and closely related categories $\Lier$ and $\Liec$, which one could view as toy models for $\Ham$. The fact that these categories satisfy the diagram axiom is nontrivial and is proved in Section~\ref{ssec:diag_axiom}.

\subsection{Definition of the partial 2-precategories $\Lier$ and $\Liec$}
\label{ssec:def_Lier_Liec}

\begin{defi}The following defines a partial 2-category $\Lier$:
\label{def:Lier}
\begin{itemize}
\item Objects are real Lie groups,

\item The opposite $G^{op}$ of a group $G$ is the group itself endowed with the opposite multiplication \[ g \cdot_{op} h = h \cdot g.\]

\item The simple 1-morphisms from $G$ to $G'$ consist in real smooth manifolds endowed with a left action of $G$ and a right action of $G'$. These two actions should commute.

\item Adjunction of simple 1-morphisms: If $M\in shom^1(G,G')$, its adjoint $M^T \in shom^1(G',G)$ consists in the same underlying manifold, with the new action defined as 
\[ 
g'\cdot m\cdot g = g^{-1} \cdot m \cdot (g')^{-1} ,
\]
for $g\in G$ and $g' \in G'$.

\item Opposites $shom^1(G,G')\to shom^1(G^{op},G'^{op})$
\[ 
g\cdot_{op} m\cdot_{op} g' = g^{-1} \cdot m \cdot (g')^{-1} ,\ g\in G, m\in M, g'\in G' .
\]
\item Horizontal composition of simple 1-morphisms is defined as a "covariant product". We will say that $M_{01}\in shom^1(G_0,G_1)$ and $M_{12}\in shom^1(G_1,G_2)$ are  composable if the action of $G_1$ on $ M_{01}\times M_{12}$ defined by $g\cdot (m,m') = (mg^{-1}, g m')$ is free and proper.

When this is the case, we will define the composition $M_{01} \circ^1_h M_{12}$ as the quotient 
\[
M_{01} \times_{G_1}M_{12} =( M_{01} \times M_{12})/_{G_1}
\] 
for this action. Since they commute with the $G_1$-action, the actions of $G_0$ and $G_2$ pass to this quotient.

\item Simple 2-morphisms. Let 
\begin{align*}
\underline{M} &= (M_{01}, M_{12}, {\ldots} , M_{(k-1)k}), \text{ and} \\ \underline{N} &= (N_{01}, N_{12}, {\ldots} , N_{(l-1)l})
\end{align*}
be in $\underline{hom}^1(G,G')$,  with 
\[
M_{i(i+1)} \colon G_i \to G_{i+1}\text{,  and }N_{j(j+1)} \colon H_j \to H_{j+1}.
\]
Denote by $\prod{\underline{M}}$ and $\prod{\underline{N}}$ the product of all the 1-morphisms appearing respectively in $\underline{M}$ and $\underline{N}$.  A simple 2-morphism from $\underline{M}$ to $\underline{N}$ is a submanifold of the product 
\[
\prod \underline{M} \times \prod \underline{N},
\]
 which is invariant by the action of all the groups $G_i$ and $H_j$, where $G_i$ acts on $M_{(i-1)i} \times M_{i(i+1)}$ by 
 \[
 g(m,m') = (mg^{-1}, gm')
 \]
 and acts trivially on the other factors, except for the two extremal groups $G$ and $G'$ that act on $M_{01}\times N_{01}$ and $M_{(k-1)k} \times N_{(l-1)l}$ respectively by 
\begin{align*}
 g(m,m') &= (gm, gm')\text{, and} \\
 g(m,m') &= (mg^{-1}, m'g^{-1}).
\end{align*}
 We will call such submanifolds \emph{multi-correspondences}.

\item Identification 2-morphisms. If $(M_{01}, M_{12}) \in comp^1(G_0,G_1,G_2)$ are composable 1-morphisms, the identification 2-morphism \[I_{G_1}(M_{01}, M_{12}) \in shom^2( (M_{01}, M_{12}), M_{01}\times_{G_1} M_{12} )\] is given by the graph of the projection $M_{01}\times M_{12} \to M_{01}\times_{G_1} M_{12}$.

\item Cyclicity and adjunction for 2-morphisms are the obvious identifications.

\item Partial vertical composition is defined as compositions of correspondences: If $ \underline{M}, \underline{N}, \underline{P}\in \underline{hom}^1(G,G')$,  $A\in shom^2(\underline{M}, \underline{N})$, and  $B\in shom^2(\underline{N}, \underline{P})$, we say that $A$ and $B$ are composable if $A\times \prod \underline{P}$ and $\prod \underline{M} \times B$ intersect transversally in $\prod \underline{M}\times \prod \underline{N}\times \prod \underline{P} $, and if the projection \[\prod \underline{M}\times \prod \underline{N}\times \prod \underline{P}  \to \prod \underline{M}\times  \prod \underline{P} \] is an embedding when restricted to this intersection. When this is the case, the composition $A \circ_v^2 B$ is defined as the image of this intersection by this projection.
\end{itemize}
\end{defi}
We will prove that $\Lier$ satisfies the diagram axiom in Section~\ref{ssec:diag_axiom}.

\begin{remark}(Relation with the category of Lie groups)
\label{rem:categ_lie_gr} The category of Lie groups embeds in $\Lier$ in a natural way. Let $f\colon G\to G'$ be a group morphism, then $M_f = G'$, endowed with the bi-action of $G$ and $G'$ defined by 
\[
g\cdot m \cdot g' = f(g) m (g'),\ g\in G, m\in M_f, g'\in G'
\] 
is an elementary 1-morphism of $\Lier$. Moreover the composition of group morphisms agrees with the horizontal composition of the $M_f$'s. One can therefore think of  $\Lier$ as an enlargement of the category of Lie groups. Furthermore,  $M_{id_G}$ plays the role of the identity for $G$.
\end{remark}

Before defining our main partial 2-category of interest, we find interesting to point out that it is already possible to produce a (sort of) (1+1)-field theory with values in the underlying 1-category of $\Lier$.

Observe first that at the level of simple morphisms, the cartesian product endows $\Lier$ with a sort of Frobenius algebra structure. For any object $G$, we can define a unit/counit, an identity, and a product/coproduct:

\begin{itemize}
\item Unit and counit. Let $e_G \in shom^1(1,G)$ consist in the point, endowed with the trivial action. The counit $(e_G)^T \in shom^1(G,1)$ is its adjoint. Composing to the right with $e_G$ corresponds to modding out by $G$.

\item Identity. For $id_G \in shom^1(G,G)$ one can take $M_{Id_G}$, with $Id_G$ the identity group morphism.

\item Product and coproduct. They can both be obtained from a simple 1-morphism $M\in shom^1(G\times G\times G, 1)$ we define now. Let  $G_0$, $G_1$, and $G_2$ stand for three copies of the same group $G$. Their product $\widetilde{M} = G_0\times G_1\times G_2$ admits three left actions, each group $G_i$ acts on $\widetilde{M}$ by left multiplication on its corresponding factor. Take then $M$ to be the quotient of $\widetilde{M}$ by $G$, where $G$ acts by simultaneous right multiplication on each factor. The three actions descend to $M$. Moreover the slice $G_0\times G_1\times \lbrace e \rbrace \subset \widetilde{M}$ identifies $M$ with $G_0\times G_1$, and under this identification the three actions become, with $g_i\in G_i$, $a\in A$ and $b\in B$:
\begin{align*}
 g_0.(a,b) =& (g_0 a,b), \\
 g_1.(a,b) =& (a,g_1 b), \\
 g_2.(a,b) =& (a g_2^{-1},b g_2^{-1}). \\
\end{align*}

By turning the $G_2$-action to a right action (i.e. acting by $g_2^{-1}$) one can think of $M$ as a product, i.e. in $shom^1(G_0 \times G_1, G_2)$. Doing likewise with $G_1$, one can think of $M$ as a coproduct, i.e. in $shom^1(G_0,  G_1 \times G_2)$.
\end{itemize}
One can check that these are indeed associative and coassociative, that $e_G$ and $(e_G)^T$ are indeed units and counits. To any surface with boundary, one can then associate a generalized 1-morphism in $\widehat{\Lier}$, after taking a pair of pants decomposition, and associating a copy of $M$ to each pair of pants.

One can for example produce a 1-morphism in $shom^1( G, 1)$ that corresponds to  the punctured torus, by contracting the coproduct with the identity, seen as in $shom^1(G\times G, 1)$. This corresponds to $G$ itself, endowed with its conjugation action. Interestingly, its cotangent bundle ends up being similar with the extended moduli space that we will associate to the punctured torus.

\begin{defi}\label{def:Liec} One can define a complex analogue $\Liec$ of $\Lier$, by taking complex Lie groups as objects, complex manifolds as 1-morphisms, and complex multi-correspondences as 2-morphisms.
\end{defi}

\subsection{Definition of the partial 2-precategory $\Ham$}
\label{ssec:def_precat}

We first recall some standard facts about Hamiltonian actions that will be relevant to the construction of $\Ham$, for the reader's convenience and to set some notation conventions.

\begin{defi}\label{def:ham_action}(Hamiltonian manifold)
Let $G$ be a Lie group. A (left) Hamiltonian $G$-manifold $(M,\omega,\mu)$ is a symplectic manifold  $(M,\omega)$  endowed with a left $G$-action by symplectomorphisms, induced by a moment map $\mu \colon M\to \mathfrak{g}^*$. The moment map is $G$-equivariant with respect to this action and  the coadjoint representation on $\mathfrak{g}^*$, and satisfies the following equation: 
\[ 
\iota_{X_\eta} \omega = d \langle \mu, \eta\rangle ,
\]
for each $\eta \in \mathfrak{g}$, where $X_\eta$ stands for the vector field on $M$ induced by the infinitesimal action, i.e. 
\[
X_\eta (m)= \frac{d}{dt}_{|t=0} (e^{t\eta} m).
\]
In other words, $X_\eta$ is the symplectic gradient of the function $\langle \mu, \eta\rangle$.

A right action will be said Hamiltonian with moment $\mu$ if the associated left action is Hamiltonian with moment $-\mu$.

\end{defi}

\begin{remark}If $G$ is connected, the moment map determines the action. If $G$ is discrete, a Hamiltonian action is just an action by symplectomorphisms.
\end{remark}

\begin{defi}\label{def:Weinstein_corresp}
Weinstein observed in \cite{Weinstein_sg} that the data of both the action and the moment map can be conveniently packaged as a Lagrangian submanifold $\Lambda_G(M)\subset T^*G\times M^-\times M$, defined as:
\[
 \Lambda_G(M) = \lbrace ((q,p),m,m') : m' = q.m,\ R_{g^{-1}}^*p = \mu(m)  \rbrace.
  \]
When $M = T^*X$ is a cotangent bundle and the action and the moment are the ones canonically {induced} from a smooth action on the base $X$, $\Lambda_G(M)$ corresponds to the conormal bundle of the graph of the action  
\[
\Gamma_G(X) \subset G\times X\times X ,
\]
where one identifies $T^* X$ with $(T^* X)^-$ via $(q,p)\mapsto (q,-p)$. 
\end{defi}

\begin{defi}\label{def:reduction}(Symplectic quotient) 
If $(M,\omega,\mu)$ is a Hamiltonian manifold, its \emph{symplectic quotient} (or \emph{reduction}) is defined as 
\[ 
M\red G  = \mu^{-1}(0) / G . 
\]
When $0$ is a regular value for $\mu$, and $G$ acts freely and properly on $\mu^{-1}(0)$, $M\red G $ is also a symplectic manifold. In this case, we will say that the action is \emph{regular}. 
\end{defi}

\begin{defi}\label{def:canon_corresp}(Canonical Lagrangian correspondence between $M$ and $M\red G$) If the action is regular in the sense of Definition~\ref{def:canon_corresp}, the image of the map $\iota \times \pi  \colon \mu^{-1}(0) \to  M^- \times M\red G$ is a Lagrangian correspondence, where $\iota$ and $\pi$ stand respectively for the inclusion and the projection.

\end{defi}
\begin{remark}\label{rem:residual_action}(Induced action on the quotient by a normal subgroup) If $(M,\omega,\mu)$ is a Hamiltonian $G$-manifold, and if $H\subset G$ is a normal subgroup, then $M$ is in particular a Hamiltonian $H$-manifold, with moment map obtained by composing $\mu$ with the dual of the inclusion of the Lie algebras. If furthermore the $H$-action is regular as in  Definition~\ref{def:reduction}, then $M\red H$ carries a residual Hamiltonian action of $G/H$.

In particular, if $M$ is a symplectic manifold endowed with two commuting Hamiltonian actions of two groups $G$ and $G'$ (that is, a Hamiltonian $G\times G'$-action), and if the action of $G$ is regular, then $M\red G$ is a Hamiltonian $G'$-manifold.

\end{remark}

\begin{remark}\label{rem:opposite_symplectic_str}(Action on $M^-$) If $(M,\omega)$ is a $G$-Hamiltonian manifold with moment map $\mu$, then $M^- = (M,-\omega)$ endowed  with the same action is also Hamiltonian, with moment map $-\mu$.
\end{remark}

\begin{defi}\label{def:G-lagr}($G$-Lagrangian)
A $G$-Lagrangian of a Hamiltonian $G$-manifold $M$ is a Lagrangian submanifold $L\subset M$ that is both contained in the zero level $\mu^{-1}(0)$, and  $G$-invariant. When the $G$-action is regular, the $G$-Lagrangians of $M$ are in one-to-one correspondence with the Lagrangians on $M\red G$ (Though a Lagrangian in $M$ need not be a $G$-Lagrangian to induce a Lagrangian on $M\red G$). 

\end{defi}

We now define $\Ham$.

\begin{defi}(The partial 2-category $\Ham$) The following construction defines a  partial 2-category:
\label{def:Ham}
\begin{itemize}
\item Objects are real Lie groups, the opposite map sends $G$ to $G^{op}$ (with the opposite group structure).

\item The simple 1-morphisms from $G$ to $G'$ are the symplectic manifolds endowed with commuting Hamiltonian left $G$-action and right $G'$-action, with respective moment maps $\mu_G$ and $\mu_{G'}$. Equivalently, a Hamiltonian left $(G\times G')$-action with moment map 
\[
\mu = (\mu_G, -\mu_{G'}) \colon M\to Lie(G)^* \times Lie(G')^* .
\]
The moment maps are part of the data.

\item The opposite identification $shom^1(G,H) \to shom^1(G^{op},H^{op})$ is given by taking the same symplectic manifold, and acting through $g^{-1}$ instead of $g$. 

\item The adjunction identification $shom^1(G,H) \to shom^1(H,G)$ is given by reversing the symplectic structure and acting through $g^{-1}$ instead of $g$.

\item Horizontal composition of simple 1-morphisms is defined as a diagonal symplectic reduction of the product. Let two simple 1-morphisms $M_{01}\in shom^1(G_0,G_1)$ and $M_{12}\in shom^1(G_1,G_2)$:
\[
\xymatrix{ G_0 \ar[r]^{M_{01}} &  G_1 \ar[r]^{M_{12}} &  G_2   }.
\] 

Endow $M_{01}\times M_{12}$ with the diagonal action of $G_1$ {given by} \[g\cdot (m,m') = (mg^{-1},gm'),\] which is Hamiltonian with respect to the moment map 
\[
\mu^{diag}_{G_1}(m_{01},m_{12}) =-\mu_{G_1}(m_{01}) + \mu_{G_1}(m_{12}). 
\]

We will say that $M_{01}$ and $M_{12}$ are composable if this action is regular. 
In this case, we define the horizontal composition of $M_{01}$ and $M_{12}$ as the symplectic reduction of $M_{01}\times M_{12}$ for this action, and denote it 

\[
M_{01} \redpr{G_1}M_{12} = (\mu^{diag}_{G_1})^{-1}(0) /_{G_1},
\] 
or sometimes $M_{01} \redpr{}M_{12}$ when $G_1$ is implicit.

It is endowed with its reduced symplectic structure, and the two actions  of $G_0$ and $G_2$ (and their moment maps) descend to the quotient: $M_{01} \redpr{G_1}M_{12}$ is then a simple morphism from $G_0$ to $G_2$.

Notice that this construction depends on the moment maps, and not just on the Hamiltonian actions.

\item Simple 2-morphisms:
 with
\begin{align*} \underline{M} &=  \left(\xymatrix{ G \ar[r]^{M_{01}} &  G_1 \ar[r]^{M_{12}} & \cdots \ar[r]^{M_{(k-1)k}} & G'}\right)\text{, and}\\
\underline{N} &= \left(\xymatrix{ G \ar[r]^{N_{01}} &  H_1 \ar[r]^{N_{12}} & \cdots \ar[r]^{N_{(l-1)l}} & G'   }\right) ,
\end{align*}
we define $shom^2(\underline{M},\underline{N})$ as the set of $K$-Lagrangians of $P$, where
\begin{align*} P =& \prod_i{M_{i(i+1)} ^-} \times \prod_j{N_{j(j+1)}}\text{, and}\\
K =& G \times \left( \prod_{i=1}^{k-1}{G_i} \right)\times G' \times \left(\prod_{j=1}^{l-1}{H_j}\right),
\end{align*}
where each factor of $K$ acts diagonally  with the diagonal moment map on the two symplectic manifolds associated to it (if the symplectic manifold comes with its opposite symplectic form, one takes the opposite moment map, in accordance with Remark~\ref{rem:opposite_symplectic_str}). We will call such submanifolds \emph{\glag}.

\item The cyclicity isomorphisms are the obvious ones, as for the opposites and adjunction of 2-morphisms.

\item The identification 2-morphisms  \[I_{G_1}(M_{01}, M_{12}) \in shom^2( (M_{01}, M_{12}), M_{01} \redpr{G_1} M_{12}) \] are defined as the canonical Lagrangian correspondence between $M_{01} \times M_{12}$ and its reduction, defined in Definition~\ref{def:canon_corresp}.

\item Vertical composition of simple 2-morphisms is defined as  composition of Lagrangian correspondences. Let $\underline{M},\underline{N},\underline{P}$ be in $\underline{hom}^1(G,G')$, $L\in shom^2(\underline{M},\underline{N})$ and $L'\in shom^2(\underline{N},\underline{P})$. Say that $L$ and $L'$ are composable if $L\times \prod {\underline{P}}$ and $\prod{\underline{M}} \times L'$ intersect transversely in
$\prod{\underline{M}} \times\prod{\underline{N}} \times\prod{\underline{P}} $, and if the projection of this intersection to  $\prod{\underline{M}} \times\prod{\underline{P}} $ is an embedding. If this is the case, the vertical composition is defined as being the image of this embedding, and is an element in $shom^2(\underline{M},\underline{P})$.

\end{itemize}
\end{defi}

\begin{remark}\label{rem:rel_Lier_Ham}(Relation between $\Lier$ and $\Ham$) 
In \cite[Sec.~5]{Weinstein_Symplectic_categories},  Weinstein shows that there is a functor from the category of manifolds and smooth maps to his symplectic category. One can extend such a functor to $\Lier$: there is a "partial 2-functor" $\Lier \to \Ham$. In short, it is defined at the level of simple morphisms by sending a group to itself, a 1-morphism to its cotangent bundle, and a 2-morphism to its conormal bundle:
\begin{align*}
 G &\mapsto G ,\\
 M &\mapsto T^*M ,\\
 C  &\mapsto N^*_C M .
\end{align*}

We will refer to it as the cotangent 2-functor. 

Indeed, a smooth action of $G$ on $M$ lifts to an action on $T^* M$ defined by 
\[
g\cdot (q,p) = (gq, p\circ (D_q L_g)^{-1}), 
\]
with $g\in G$, $(q,p)\in T^*M$.

It follows from Cartan's formula that this action is Hamiltonian, with moment map $\mu \colon T^*M \to \mathfrak{g}^*$ defined by \[\mu(q,p) \cdot \xi = p(X_\xi(q)). \]

Moreover, the horizontal compositions of 1-morphisms are compatible: if two simple 1-morphisms  \[\xymatrix{ G_0 \ar[r]^{M_{01}} &  G_1 \ar[r]^{M_{12}} &  G_2   }\]
are composable in $\Lier$, then the same holds in $\Ham$ for
\[
\xymatrix{ G_0 \ar[r]^{T^*M_{01}} &  G_1 \ar[r]^{T^*M_{12}} &  G_2   },
\]
one has $T^* (M_{01}\times_{G_1} M_{12}) = T^*M_{01} \redpr{G_1}T^*M_{12}$, and the conormal bundle of the identification 2-morphism $I_{G_1}(M_{01}, M_{12})$ is $I_{G_1}(T^*M_{01}, T^*M_{12})$.

If $C\subset M\times N$ is a smooth correspondence, then its conormal bundle is a Lagangian submanifold $N^*_C M \subset T^*M\times T^*N$. But $T^*M$ is symplectomorphic to its opposite symplectic structure via the map $(q,p)\mapsto (q, -p)$, so applying this identification, one can think of it as a Lagrangian correspondence  $N^*_C \subset (T^*M)^-\times T^*N$.

If moreover $C$ is invariant for a $G$-action, then $N^*_C$ will be a $G$-Lagrangian. If two simple 2-morphisms are vertically composable in $\Lier$, then the same is true for their conormal bundles, and the conormal bundle of their composition agrees with the composition of the conormal bundles.
\end{remark}

\begin{remark}Following the arborealization program of Nadler \cite{Nadler_Arboreal_singularities}, it is maybe possible to extend the class of 1-morphisms in $\Lier$ to manifolds with certain type of singularities (with extra local structure), and extend the cotangent functor by sending a singular manifold to a Weinstein manifold with Lagrangian skeleton being the singular manifold.
\end{remark}

\begin{remark} We can use this functor to transport structure from $\Lier$ to $\Ham$, such as  identities, products and coproducts, units and co-units ... that would lead to a $(1+1)$-theory taking values in $\Ham$. The 2-functor we will construct in Section~\ref{sec:constr}, {although} close to, will not correspond to that.
\end{remark}

\begin{remark}\label{rem:identities_ham}(Identities) For any Lie group $G$, its cotangent bundle $T^*G$, endowed with left and right pullbacks, plays the role of an identity (i.e. can be composed with any other morphism, and the composition is the same morphism). It is  the image of the identity of $G$ in $\Lier$ by the cotangent functor.

If $M\colon G \to G'$ is a simple 1-morphism, the diagonal $\Delta_M\in shom^2(M,M)$ plays the role of an identity. However, for a general 1-morphism representative $\underline{M} = (M_0, M_1, {\ldots}, M_k) \in \underline{hom}^1(G,G')$, the product of the diagonal of all its factors is \emph{not} an identity, since it is not an element in $shom^2(\underline{M},\underline{M})$, see Remark~\ref{rem:horiz_2comp_ham}.

One can nevertheless find length two identity representatives as follows: let 
\[
\widehat{\underline{M}} = (M_0, T^* G_1, M_1, T^* G_2,  {\ldots}, T^* G_{k-1}, M_k) \in \underline{hom}^1(G,G'),
\]
then Weinstein's correspondence (see Definition~\ref{def:Weinstein_corresp}) for the action of  
\[
K =G_1\times \cdots \times G_{k-1}
\] 
on $\prod \underline{M}$ defines a simple 2-morphism 
\[
\Lambda (\prod \underline{M}) \in shom^2(\underline{M},\widehat{\underline{M}}),
\]
and \[\left(\Lambda_K (\prod \underline{M}), \Lambda_K (\prod \underline{M})^T \right) \in \underline{hom}^2(\underline{M}, \underline{M})\] is an identity for $\underline{M}$.
\end{remark}

\begin{remark}\label{rem:horiz_2comp_ham}(Horizontal composition of simple 2-morphisms) Let 
\begin{align*}\underline{M}, \underline{M}'&\in \underline{hom}^1(G,G'), \\
\underline{N}, \underline{N}'&\in \underline{hom}^1(G',G''), \\
L&\in shom^2(\underline{M}, \underline{M}'),\text{ and} \\
L' &\in shom^2(\underline{N}, \underline{N}').
\end{align*}
One could be tempted to define the horizontal compositon of $L$ and $L'$ as their cartesian product. 
Unfortunately it is generally not an element in $shom^2( \underline{M} \sharp^1_h \underline{N}  , \underline{M}' \sharp^1_h \underline{N}')$. For a counter-example, consider 
\[
(M,N)\colon G\to G'\to G'',
\]
the product of the diagonals $\Delta_M \times \Delta_N$ is not in the zero level of the moment map of the diagonal action of $G'$. A natural candidate for the identity 2-morphism of  $(M,N)$ would rather be the subset of $M^-\times M \times N^-\times N$ of elements $(m,m',n,n')$ such that $\mu_{G'}^{M} (m) +\mu_{G'}^{N} (n) = \mu_{G'}^{M} (m') +\mu_{G'}^{N} (n')$ and that $(m,n)$ and $(m',n')$ lie in the same orbit. This is an identity if the diagonal action of $G'$ is free, but is singular in general. For example if $G' = U(1)$, $M$ is a point, and $N=\cc$, acted on by $ U(1)$  by rotations with moment map $\mu(z) = \abs{z}^2$, this set consists in a single point.
\end{remark}

\begin{remark}\label{rem:Liec_Ham}(Relation between $\Liec$ and $\Ham$) Geometric invariant theory suggests a correspondence between (a variation of) $\Liec$ and $\Ham$. If $M$ is a Hamiltonian $G$-manifold and $J$ is a $G$-invariant almost complex structure, then under some conditions one can extend the action to an action by the complexification $G^\cc$, and the Kempf-Ness theorem says that the symplectic quotient $M\red G$ agrees with the GIT quotient of $M$ by $G^\cc$. This suggests a correspondence:
\begin{align*}
\Ham &\longleftrightarrow \Liec \\
 G &\longleftrightarrow G^\cc \\
 M&\longleftrightarrow  M\\
 L&\longleftrightarrow L,
\end{align*}
that should preserve the various operations. It would be interesting to define and study this correspondence in more detail.
\end{remark}

\begin{remark}\label{rem:end_trivial_group}
(Endomorphism category of the trivial group) The endomorphism category of the trivial group in $\Ham$, $End_{\Ham}(e)$, is similar with \WW's category $\Symp$, more precisely one has a functor from $\Symp$ to  $End_{\Ham}(e)$. However we should point out that this functor is not surjective, since there are 1-morphisms $e\to G_1 \to G_2 \to \cdots \to e$ that cannot be simplified to a length 1 sequence. For instance, the character variety of a closed surface $\Sigma$ is not an object of $\Symp$ due to its singular nature, however it can be represented by an object in $End_{\Ham}(e)$ using extended moduli spaces, as we shall see in Section~\ref{sec:moduli_spaces}. 
However, this functor is injective faithful, since if a sequence of simple 1-morphisms (resp. a diagram of simple 2-morphisms) in $\Ham$ can be simplified to a length 1 sequence (resp. a diagram with a single correspondence), then the resulting 1-morphism (resp. 2-morphism) is uniquely determined and given by the quotient of all intermediate groups.
\end{remark}

\subsection{Proof of the diagram axiom}
\label{ssec:diag_axiom}

We now prove that $\Lier$ and $\Ham$ both satisfy the diagram axiom, which implies that these are partial 2-categories, and allows one to define their strictifications.

The idea of proof is similar for both categories: we use the fact that even though compositions are only partially defined in these categories, these can always be defined in a set theoretical way: even if a group action is not free, one can always define the quotient set, likewise one can always define the reduction set of a Hamiltonian action. We will first prove the axiom at the set level, and then we will "lift" the statement to the initial framework, using the correspondence  between simple 2-morphisms and subsets of a (possibly singular) quotient set.

Recall the setting: let $\underline{\varphi}_0$, $\underline{\varphi}_1$, ... , $\underline{\varphi}_k = \underline{\varphi}_0$ be a sequence of representatives of general 1-morphisms in $\underline{hom}^1(x,y)$, such that for any $i$, $\underline{\varphi}_{i+1}$ is either a composition or a decomposition of $\underline{\varphi}_i$. To such a sequence is associated a diagram $\mathcal{D}$ by patching altogether all the identification 2-morphisms. We aim at proving that the diagram $\mathcal{D}$ is an identity for $\underline{\varphi}_0$.

For any $i$, let $\tilde{\varphi}_i$ stand for the set obtained by forcing all the compositions appearing in $\underline{\varphi}_i$:
\begin{itemize}
\item In $\Lier$, this is the quotient of the product of all the one morphisms appearing in $\underline{\varphi}_i$, modulo the diagonal actions of all the groups appearing in the sequence, except the first and the last.
\item In $\Ham$, this is the "symplectic quotient" of the product of all the one morphisms appearing in $\underline{\varphi}_i$, modulo the diagonal actions of all the groups appearing in the sequence, except the first and the last, namely the quotient set of the zero level of the associated moment map. 
\end{itemize}

Since we assumed that $\underline{\varphi}_{i+1}$ is either a composition or a decomposition of $\underline{\varphi}_i$, it follows that all the $\tilde{\varphi}_i$ are equal to the same set, which we will denote $\tilde{\varphi}$.

Let $ L_0\in shom^2(\underline{\psi}, \underline{\varphi}_0)$, for some $\underline{\psi}\in \underline{hom}^1(x,y)$. By composing it successively with all the identification morphisms (or their adjoint), we get for each $i$, a simple 2-morphism $L_i\in shom^2(\underline{\psi}, \underline{\varphi}_i)$, and we would like to prove that $ L_0 = L_k$.

For any $i$, Let $\tilde{L}_i$ be the subset of the  product of all the 1-morphisms appearing in $\underline{\psi}$ and $\tilde{\varphi}_i$, that corresponds to the quotient of $L_i$ (loosely speaking, $\tilde{L}_i \in ``shom^2(\underline{\psi}, \tilde{\varphi}_i)"$). By construction, and with the identification of the $\tilde{\varphi}_i$ in mind, all these correspond to the same subset, in particular $\tilde{L}_0 = \tilde{L}_k$.

Now, in either $\Lier$ or $\Ham$,  two simple morphisms in $shom^2(\underline{\psi}, \underline{\varphi}_0)$ are equal if and only if the corresponding subsets of $\left( \prod{\underline{\psi}} \right) \times \tilde{\varphi}_0 $ are equal. Indeed, an invariant subset is determined by its quotient in the orbit space. We therefore have $L_0 = L_k$. As $L_k = L_0 \circ \mathcal{D}$, we just proved that $\mathcal{D}$ is an identity for left composition. One can similarly show that for right composition. This completes the proof. \cqfd

\section{Moduli spaces of connections}
\label{sec:moduli_spaces}

Throughout this section we identify $SU(2)$ with the group of unit quaternions, and $SO(3)$ with its quotient by $\Z{2}$. Their common Lie algebra $\mathfrak{g} = \mathfrak{su(2)}$ then corresponds to the space of pure quaternions (with zero real part) and is equipped with the standard bi-invariant inner product of $\hh$, which we use to identify $\mathfrak{g}$ with its dual $\mathfrak{g}^*$. We will denote $B_{\mathfrak{g}}(\pi) \subset \mathfrak{g}$ the open ball of radius $\pi$, wich is sent injectively to $SU(2) \setminus \lbrace -1\rbrace$ by the exponential map.

\subsection{Moduli space of a closed 1-manifold}
\label{sec:moduli_space1}

Let ${C}$ be a circle with a base point. One can associate to it the moduli space ${G}({C})$ of framed $SU(2)$-{connections}, i.e. the quotient of the space of {connections} modulo gauge transformations that do not act on the base point. It is identified with the representation variety of its fundamental group, namely $Hom(\pi_1({C}), SU(2))$. Notice that since ${C}$ has an abelian fundamental group, a choice of a different basepoint yields a \emph{canonically} isomorphic space.

 If ${C}$ is oriented, its orientation furnishes a preferred generator of $\pi_1({C})$, which permits to identify  $G({C})$ with $SU(2)$ and therefore endows it with a group structure. If one reverse the orientation, the new generator corresponds to the inverse of the old one, and the identification with  $SU(2)$ differ by the inverse map. Therefore the group structure is changed to its opposite:
\[ 
G(- {C}) = G({C})^{op} .
\]
The group structure constructed can also be defined more topologically: given two elements of $G({C})$, with two {connections} $A_1$ and $A_2$ representing them, over bundles $P_1$ and $P_2$. One can cut $P_1$ and $P_2$ along the fiber over the base point of ${C}$, and glue them back as in Figure~\ref{fig:group_structure} to form a bundle over the gluig of the two segments, which one can identify with ${C}$, with the basepoint chosen as in the picture.

\begin{figure}[!h]
    \centering
    \def\svgwidth{.4\textwidth}
\begingroup%
  \makeatletter%
  \providecommand\color[2][]{%
    \errmessage{(Inkscape) Color is used for the text in Inkscape, but the package 'color.sty' is not loaded}%
    \renewcommand\color[2][]{}%
  }%
  \providecommand\transparent[1]{%
    \errmessage{(Inkscape) Transparency is used (non-zero) for the text in Inkscape, but the package 'transparent.sty' is not loaded}%
    \renewcommand\transparent[1]{}%
  }%
  \providecommand\rotatebox[2]{#2}%
  \newcommand*\fsize{\dimexpr\f@size pt\relax}%
  \newcommand*\lineheight[1]{\fontsize{\fsize}{#1\fsize}\selectfont}%
  \ifx\svgwidth\undefined%
    \setlength{\unitlength}{262.5bp}%
    \ifx\svgscale\undefined%
      \relax%
    \else%
      \setlength{\unitlength}{\unitlength * \real{\svgscale}}%
    \fi%
  \else%
    \setlength{\unitlength}{\svgwidth}%
  \fi%
  \global\let\svgwidth\undefined%
  \global\let\svgscale\undefined%
  \makeatother%
  \begin{picture}(1,1)%
    \lineheight{1}%
    \setlength\tabcolsep{0pt}%
    \put(0,0){\includegraphics[width=\unitlength]{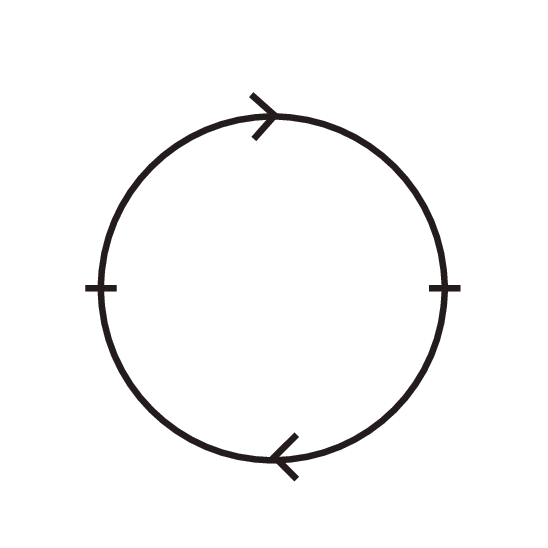}}%
    \put(0.25714286,1.79096888){\color[rgb]{0,0,0}\makebox(0,0)[lt]{\begin{minipage}{0.17142857\unitlength}\raggedright  \end{minipage}}}%
    \put(0.01697402,0.44428022){\color[rgb]{0,0,0}\makebox(0,0)[lt]{\lineheight{0}\smash{\begin{tabular}[t]{l}$*$\end{tabular}}}}%
    \put(0.39884007,0.85461242){\color[rgb]{0,0,0}\makebox(0,0)[lt]{\lineheight{0}\smash{\begin{tabular}[t]{l}$A_1$\end{tabular}}}}%
    \put(0.44174979,0.04818071){\color[rgb]{0,0,0}\makebox(0,0)[lt]{\lineheight{0}\smash{\begin{tabular}[t]{l}$A_2$\end{tabular}}}}%
  \end{picture}%
\endgroup%

      \caption{Group structure on $G({C})$.}
      \label{fig:group_structure}
\end{figure}

If now ${C} = {C}_1\sqcup \cdots \sqcup {C}_k $ is a disconnected closed oriented 1-manifold, pick a basepoint on each component, and define the group $G({C})$ as the product of the $G({C}_i)$'s.

\subsection{Moduli space of a surface with boundary}
\label{ssec:moduli_space2}

\begin{defi}\emph{(Extended moduli spaces)}\label{def:extmod}
Let $\Sigma$ be a compact oriented surface with boundary
\[
\partial \Sigma = \partial_1 \Sigma \sqcup \cdots \sqcup \partial_k \Sigma ,
\]
where the $\partial_i \Sigma$ stand for the connected components of $\partial \Sigma $.
\begin{itemize}
\item (Extended moduli space associated to a surface, \cite[Def. 2.1]{jeffrey}) 
Define the following space of flat {connections}:
\[ 
\mathscr{A}_F^\mathfrak{g}(\Sigma)  = \lbrace A \in \Omega^{1} (\Sigma )\otimes \mathfrak{su(2)}\ |\ F_A = 0,\  A_{|\nu \partial_i \Sigma} = \theta_i ds \rbrace, 
\]
where $\nu \partial \Sigma$ is a non-fixed tubular  neighborhood of $\partial \Sigma$, $s$  the parameter of $\rr/\zz$, and $\theta_i \in \mathfrak{g}$ is a constant element. The group
 \[ 
 \Gc (\Sigma ) = \left\lbrace u \colon \Sigma \rightarrow SU(2)\ |\ u_{|\nu \partial \Sigma} = 1 \right\rbrace 
 \]
acts by gauge transformations on $\mathscr{A}_F^\mathfrak{g}(\Sigma)$.
 
The extended moduli space is then defined as the quotient
 \[ 
  \Mg (\Sigma, p) = \mathscr{A}_F^\mathfrak{g}(\Sigma) /\Gc (\Sigma ),
  \]
This space carries a closed 2-form $\omega$ defined by:
\[ \omega_{[A]}([\alpha],[\beta]) = \int_{\Sigma} \langle \alpha\wedge\beta \rangle   ,\]
with $ [A]\in \Mg (\Sigma, p) $ and $\alpha,\beta$ representing tangent vectors at $[A]$ of $\Mg (\Sigma, p)$, namely $d_A$-closed $\mathfrak{su(2)}$-valued  1-forms, of the form $\eta_i ds$ near $\partial_i \Sigma$.

 Furthermore it has a Hamiltonian $SU(2)$-action, whose moment map is given by the elements $\theta_i \in \mathfrak{su(2)}$ such that $A_{|\nu \partial_i \Sigma} = \theta_i ds$. 

\item  Denote $\N(\Sigma, p)$ the subset of $\Mg(\Sigma, p)$ consisting in equivalence classes of connections for which $\abs{\theta_i} < \pi$. 
The form $\omega$ is symplectic on $\N(\Sigma, p)$ by \cite[Prop.~3.1]{jeffrey} (the proposition is stated and proved in the case when $\Sigma$ has connected boundary, but the same proof applies to any number of components).

\end{itemize}

\end{defi}

\begin{remark} The moduli space $\N(\Sigma,p)$ is noncompact. In \cite{MW}, \MW\ define Floer homology inside another moduli space $\Nc(\Sigma, p)$, which is a compactification of $\N(\Sigma,p)$ by symplectic cutting. We will ignore this compactification in this paper, as one can think of it just as a technical step in proving that Floer homology is well-defined inside $\N(\Sigma,p)$.
\end{remark}

\begin{remark}With the identification of $SU(2)$ with $G({C})$ in mind, one can think of the group action in a more topological way: Let $[A]$ be in $\N(\Sigma)$, and $g\in G({C})$, represented by a connection $A_g$ on the circle. Take a pair of pants ${P}$ with an embedded trivalent graph as in Figure~\ref{fig:group_action}. Let ${C}_1$, ${C}_2$ and ${C}_3$ denote its boundary components. Choose a trivial bundle $P$ over ${P}$, and equip it with a flat {connection} $A_P$  that corresponds to $A_{|\partial \Sigma}$ on ${C}_1$, and to $A_g$ on ${C}_2$. Gluing ${P}$ to $\Sigma$ and cutting along the trivalent graph yields a new connection on $\Sigma$ that corresponds to $g .[A]$.
\end{remark}

\begin{figure}[!h]
    \centering
    \def\svgwidth{.4\textwidth}
\begingroup%
  \makeatletter%
  \providecommand\color[2][]{%
    \errmessage{(Inkscape) Color is used for the text in Inkscape, but the package 'color.sty' is not loaded}%
    \renewcommand\color[2][]{}%
  }%
  \providecommand\transparent[1]{%
    \errmessage{(Inkscape) Transparency is used (non-zero) for the text in Inkscape, but the package 'transparent.sty' is not loaded}%
    \renewcommand\transparent[1]{}%
  }%
  \providecommand\rotatebox[2]{#2}%
  \newcommand*\fsize{\dimexpr\f@size pt\relax}%
  \newcommand*\lineheight[1]{\fontsize{\fsize}{#1\fsize}\selectfont}%
  \ifx\svgwidth\undefined%
    \setlength{\unitlength}{787.5bp}%
    \ifx\svgscale\undefined%
      \relax%
    \else%
      \setlength{\unitlength}{\unitlength * \real{\svgscale}}%
    \fi%
  \else%
    \setlength{\unitlength}{\svgwidth}%
  \fi%
  \global\let\svgwidth\undefined%
  \global\let\svgscale\undefined%
  \makeatother%
  \begin{picture}(1,1.23809524)%
    \lineheight{1}%
    \setlength\tabcolsep{0pt}%
    \put(0,0){\includegraphics[width=\unitlength]{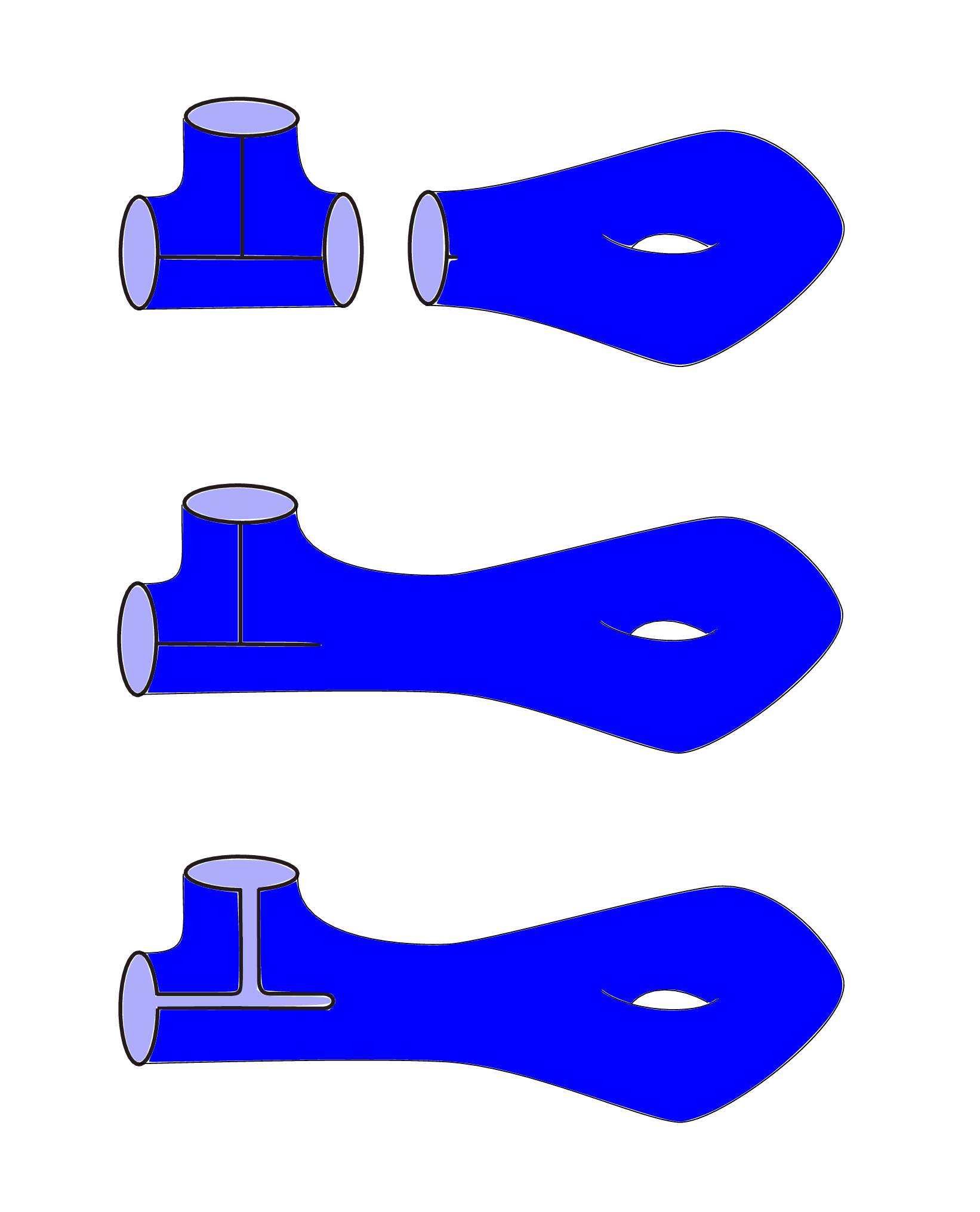}}%
    \put(0.08571429,0.59698963){\color[rgb]{0,0,0}\makebox(0,0)[lt]{\begin{minipage}{0.05714286\unitlength}\raggedright  \end{minipage}}}%
    \put(0.18986837,0.86535197){\color[rgb]{0,0,0}\makebox(0,0)[lt]{\lineheight{0}\smash{\begin{tabular}[t]{l}${P}$\end{tabular}}}}%
    \put(0.46358018,0.84952532){\color[rgb]{0,0,0}\makebox(0,0)[lt]{\lineheight{0}\smash{\begin{tabular}[t]{l}$\Sigma$\end{tabular}}}}%
    \put(0.23145271,1.17026653){\color[rgb]{0,0,0}\makebox(0,0)[lt]{\lineheight{0}\smash{\begin{tabular}[t]{l}$A_g$\end{tabular}}}}%
    \put(0.42649713,0.76457412){\color[rgb]{0,0,0}\makebox(0,0)[lt]{\lineheight{0}\smash{\begin{tabular}[t]{l}$\downarrow$\end{tabular}}}}%
    \put(0.43096372,0.39141953){\color[rgb]{0,0,0}\makebox(0,0)[lt]{\lineheight{0}\smash{\begin{tabular}[t]{l}$\downarrow$\end{tabular}}}}%
  \end{picture}%
\endgroup%

      \caption{Group action on $\N(\Sigma)$.}
      \label{fig:group_action}
\end{figure}

\begin{remark}\label{rem:explicit_description}(Explicit descrition, \cite[Sec.~6.2]{jeffrey})
Holonomies provide an explicit description of $\N(\Sigma,p)$.  Assume $\Sigma$ is connected. Each boundary component $\partial_i \Sigma$ has a basepoint $z_i$ corresponding to the image of $0\in \rr/\zz$ by the parametrizations. Let $\gamma_2$, ..., $\gamma_k$ be disjoin arcs connecting $z_1$ to $z_2$, ..., $z_k$, and let $\alpha_1$, $\beta_1$, ..., $\alpha_g$, $\beta_g$ be loops based at $z_1$ that form a symplectic basis of the fundamental group of $\Sigma \setminus \lbrace \gamma_2, {\ldots}, \gamma_k \rbrace$. With $A_i$, $B_i$ and $\Gamma_i$ denoting the holonomy of a {connection} along $\alpha_i$, $\beta_i$ and $\gamma_i$ respectively, and $\theta_i \in B_{\mathfrak{g}}(\pi)$ the value at $\partial_i \Sigma$, $\N(\Sigma,p)$ can be identified with the space of tuples
\[
 (\theta_1, {\ldots}, \theta_k,\Gamma_2, {\ldots}, \Gamma_k,A_1, B_1 {\ldots}, A_g, B_g ) \in (B_{\mathfrak{g}}(\pi))^{k} \times SU(2)^{k-1+2g}
\]
that satisfy the relation
\[
e^{\theta_1} \left( \Gamma_2 e^{\theta_2} \Gamma_2^{-1}\right)  \cdots \left( \Gamma_k e^{\theta_k} \Gamma_k^{-1}\right)  [A_1, B_1] \cdots [A_g, B_g] =1.
\]
And since $\theta_1 \in B_{\mathfrak{g}}(\pi)$, it is uniquely determined by the other elements and the above relation, so $\N(\Sigma,p)$ can be identified with the open (and hence smooth) subset of elements
\[
 (\theta_2, {\ldots}, \theta_k,\Gamma_2, {\ldots}, \Gamma_k,A_1, B_1 {\ldots}, A_g, B_g ) \in (B_{\mathfrak{g}}(\pi))^{k-1} \times SU(2)^{k-1+2g}
\] 
satisfying 
\[
 \left( \Gamma_2 e^{\theta_2} \Gamma_2^{-1}\right)  \cdots \left( \Gamma_k e^{\theta_k} \Gamma_k^{-1}\right)  [A_1, B_1] \cdots [A_g, B_g] \neq -1.
\]
\end{remark}
\begin{prop}\label{prop:gluing_equals_reduction}(Gluing almost equals reduction)
Let $\Sigma \in hom^1({C}_1,{C}_2)$ and $S \in hom^1({C}_2,{C}_3)$ be such that $\Sigma$, $S$ and $\Sigma\cup_{{C}_2} S$ have no closed components. Then one has a natural symplectomorphism
\[
 \N(\Sigma\cup_{{C}_2} S) \setminus C = \N(\Sigma) \redpr{G({C}_2)} \N(S)  ,
\]
where $C$ is the subset of {connections} whose holonomy around ${C}_2$ equals $-1$. Moreover, $C$ is a union of codimension 3 coisotropic submanifolds of $\N(\Sigma\cup_{{C}_2} S)$.
\end{prop}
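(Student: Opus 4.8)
The plan is to reduce both sides to the explicit holonomy description of the extended moduli spaces given in Remark~\ref{rem:explicit_description}, and to identify the diagonal symplectic reduction of Definition~\ref{def:Ham} with the operation of cutting and gluing flat connections along ${C}_2$. First I would record that the $G({C}_2)$-action on $\N(\Sigma)$ (and on $\N(S)$) coming from constant gauge transformations near the ${C}_2$-boundary is Hamiltonian, with moment map the boundary value $\theta_{{C}_2}$ taking values in $\Bg \subset \g \cong \g^*$. Writing ${C}_2 = {C}_2^{(1)}\sqcup\cdots\sqcup {C}_2^{(m)}$ so that $G({C}_2)\cong SU(2)^m$, the diagonal moment map $\mu^{diag}_{G({C}_2)}$ then vanishes exactly when the two boundary values agree on each component of ${C}_2$ (up to the orientation reversal built into the gluing, which is absorbed into the sign convention of the composition).

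Next I would carry out the reduction explicitly. A point of $(\mu^{diag}_{G({C}_2)})^{-1}(0)$ is a pair of flat connections on $\Sigma$ and $S$ whose restrictions to ${C}_2$ have the same holonomy $e^{\theta_{{C}_2}}$ with $\abs{\theta_{{C}_2}} < \pi$; quotienting by the diagonal $G({C}_2)$ glues them along ${C}_2$ modulo the residual gauge, producing a flat connection on $\Sigma\cup_{{C}_2}S$ whose holonomy $h_{{C}_2}$ around ${C}_2$ equals $e^{\theta_{{C}_2}}\neq -1$. Conversely, any class in $\N(\Sigma\cup_{{C}_2}S)$ with $h_{{C}_2}\neq -1$ can be written uniquely as $e^{\theta}$ with $\theta\in\Bg$ and cut along ${C}_2$ into connections lying in $\N(\Sigma)$ and $\N(S)$. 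This yields the bijection
\[
 \N(\Sigma\cup_{{C}_2} S)\setminus C = \N(\Sigma) \redpr{G({C}_2)} \N(S),
\]
with $C = \{h_{{C}_2} = -1\}$ being precisely the locus excluded by the constraint $\abs{\theta_{{C}_2}}<\pi$, i.e. the values where the exponential fails to be a diffeomorphism onto $SU(2)\setminus\{-1\}$.

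To upgrade this to a symplectomorphism I would invoke additivity of the $2$-form. Since $\omega_{[A]}([\alpha],[\beta]) = \int_{\Sigma}\langle \alpha\wedge\beta\rangle$ and $\int_{\Sigma\cup_{{C}_2} S} = \int_{\Sigma} + \int_{S}$ for $d_A$-closed forms matching along the gluing region, the pullback of $\omega_{\Sigma\cup S}$ coincides with $\omega_\Sigma + \omega_S$ on the matching locus, which is exactly the form that descends to the reduced symplectic structure on $\N(\Sigma)\redpr{G({C}_2)}\N(S)$. The remaining bookkeeping is to check that tangent vectors are correctly identified: $d_A$-closed $\g$-valued $1$-forms on $\Sigma\cup S$ with the prescribed $\eta\,ds$ behavior near ${C}_1,{C}_3$ correspond to pairs of such forms on $\Sigma$ and $S$ agreeing along ${C}_2$, modulo the infinitesimal diagonal gauge action, matching the tangent space of the reduction.

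Finally, for the structure of $C$ I would argue that $h_{{C}_2}$ behaves as an $SU(2)$-valued (group-valued) moment map for the ${C}_2$-gauge action, so that $C = h_{{C}_2}^{-1}(-1)$ is the preimage of the central conjugacy class $\{-1\}$. Its codimension is then $\dim SU(2) - \dim\{-1\} = 3$, and, being the level set of such a moment map, it is coisotropic; the word ``union'' in the statement reflects the decomposition over the components of ${C}_2$ together with the strata where $h_{{C}_2}$ ceases to be submersive. I expect the main obstacle to be exactly this final point: establishing coisotropy and the precise codimension at the \emph{central} value $-1$, where conjugation on $SU(2)$ is maximally degenerate. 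To make this rigorous rather than heuristic I would write $h_{{C}_2} = -\exp(\xi)$ for $\xi\in\g$ near $C$, so that $C = \{\xi = 0\}$, and verify directly that the Hamiltonian vector fields of the components of $\xi$ span the characteristic distribution of $C$, which simultaneously gives codimension $3$ and coisotropy.
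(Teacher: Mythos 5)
Your proposal follows essentially the same route as the paper's proof: a gluing/cutting map between $(\mu^{diag}_{G({C}_2)})^{-1}(0)/G({C}_2)$ and the complement of $C$, injectivity and surjectivity read off from the explicit holonomy description (the boundary holonomies being exponentials of $\theta\in\Bg$), and preservation of the symplectic forms via additivity of the integral over $\Sigma\sqcup S$ versus $\Sigma\cup_{{C}_2}S$. You go somewhat further than the paper in sketching a justification of the final claim that $C$ is a union of codimension~3 coisotropic submanifolds (via the holonomy around ${C}_2$ as a group-valued moment map), a point the paper's proof does not address explicitly; your proposed verification at the central value $-1$ is the right thing to pin down.
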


\begin{proof}
If $A_\Sigma$ and $A_S$ denote two {connections} on $\Sigma$ and $S$ respectively that coincide on ${C}_2$, they can be glued together to a {connection} on $\Sigma\cup_{{C}_2} S$. This defines a gluing map 
\[
(\mu^{diag}_{G({C}_2)})^{-1}(0) \subset \N(\Sigma) \times \N(S) \to \N(\Sigma\cup_{{C}_2} S)
\]
that passes to the quotient for the diagonal $G({C}_2)$-action. One can see from the explicit description of the moduli spaces that the induced map is injective, and that its image is the complement of $C$, since the holonomies around ${C}_2$ correspond to the exponential of the $\theta_i$ values, which live in $B_{\mathfrak{g}}(\pi)$. Finally, this map preserves the symplectic forms, as both are defined in an analogous way, by integrating the forms on $\Sigma\sqcup S$ and $\Sigma\cup_{{C}_2} S$.
\end{proof}

\section{Construction of the functor}
\label{sec:constr}

We now define a $\Hamhat$-valued $(1+1+1)$-field theory. If a true ``gluing equals reduction'' principle would hold, i.e.  if there was no submanifold $C$ appearing in Proposition~\ref{prop:gluing_equals_reduction}, we would obtain a 2-functor from $\Cob_{1+1+1}$ to $\Hamhat$. Instead, we will obtain what we will call a quasi 2-functor: the source category will instead be $\Cob_{1+1+1}^{elem}$ (see Definition~\ref{def:cob_elem}), and consist {of} cobordisms equipped with decompositions. We then define an equivalence relation on morphism spaces of $\Hamhat$, and define a quasi-functor to be a functor from  $\Cob_{1+1+1}^{elem}$ such that a (2-)cobordism endowed with two different decompositions will result in two equivalent morphisms in $\Hamhat$.

\begin{remark} In future work, we expect to promote $\Hamhat$ to a (sort of) 3-category, using equivariant Floer homology as 3-morphism spaces. Such a construction should permit to define a linearization 2-functor $\mathcal{L}\colon \Hamhat \to \mathcal{C}$ that would land in a more algebraic 2-category (such as rings, $A_\infty$-algebras with ring actions, $A_\infty$-modules). We expect that the codimension 3 submanifolds $C$ of Proposition~\ref{prop:gluing_equals_reduction} should be invisible to equivariant Floer homology. 

This expectation, together with the observation in Remark~\ref{rem:recover_Lagrangians},  should imply that the composition of the functor with this linearization should descend to $\Cob_{1+1+1}$:
\[
\xymatrix{\Cob_{1+1+1}^{elem} \ar[d] \ar[r]^{} & \Hamhat \ar[d]^{\mathcal{L}} \\
\Cob_{1+1+1}  \ar@{-->}[r] & \mathcal{C} 
  } .
\]
Ultimately, one would also hope to extend such a 2-functor to a 3-functor.
\end{remark}

\subsection{Cobordisms and decompositions to elementary pieces}
\label{ssec:cob_and_decomp}

\begin{defi}\label{def:cob1+1+1}
Let $\Cob_{1+1+1}$ stand for the weak 2-category whose:

\begin{itemize}
\item Objects are oriented closed one-manifolds, endowed with an orientation preserving parametrization by $\rr/\zz$ of each connected component.

\item 1-morphisms  are compact oriented cobordisms (i.e. surfaces with boundary), 
\item 2-morphisms are diffeomorphism classes of compact oriented 3-manifolds with corners. That is, for $\Sigma$ and $S$ two 1-morphisms from $C_1$ to $C_2$, a 2-morphism from $\Sigma$ to $S$ is represented by a compact oriented 3-manifold with boundary $Y$, with a diffeomorphism between $\partial Y$ and $(-\Sigma)\cup_{C_1\cup C_2} S$. And two such $Y$ are identified if there is a diffeomorphism between them that is compatible with the identifications of the boundaries. 
\end{itemize}
All the compositions are given by glueing.
\end{defi}

We now define elementary morphisms. We use a slightly wider class than the usual notion of having a function with at most one critical point, as for example in \cite{WWfft}.

\begin{defi}(Elementary morphisms of $\Cob_{1+1+1}$).

\underline{Objects.} All objects are said to be elementary, including the empty set.

\underline{1-morphisms.} A 1-morphism is elementary if it has no closed components. This also includes the empty set.

Strictly speaking, elementary 2-morphisms will not be 2-morphisms between 1-morphisms of $\Cob_{1+1+1}$, but rather between sequences of elementary  1-morphisms, i.e. 1-morphisms with a given decomposition into elementary 1-morphisms.

\underline{2-morphisms.} Let $C_1$, $C_2$ be two objects, and $\Sigma$, $S$ be 1-morphisms from $C_1$ to $C_2$, endowed with decompositions $\underline{\Sigma} = (\Sigma_0, {\ldots}, \Sigma_k )$ and $\underline{S} =(S_0, {\ldots}, S_l)$ into elementary 1-morphisms (i.e. with no closed component).  An elementary 2-morphism $Y$ from  $\underline{\Sigma}$ to  $\underline{S}$ is  either:
\begin{itemize}
\item A \emph{compression body}:  $Y$ is a compression body if it admits a Morse function $f\colon (Y\setminus (C_1 \cup C_2 )) \to \rr$ that is minimal on $\Sigma$, maximal on $S$, vertical near $C_1$ and $C_2$ (meaning that, for an identification of a neighborhood of $C_i$ in $Y\setminus (C_1 \cup C_2$ with $C_i\times [0,1] \times (0,1]$, with $\Sigma$ and $S$ corresponding to $C_i\times \lbrace 0,1\rbrace \times (0,1]$, $f$ corresponds to the projection to $[0,1]$), and admits (or not) critical points that all have the same index, 1 or 2. Moreover, for some pseudo-grandient of $f$, the decomposition $\underline{\Sigma}$ of $\Sigma$ flows down to the decomposition $\underline{S}$ of $S$. This definition includes the empty set.

\item A 0-handle or a 3-handle attachment, i.e. $Y$ is a 3-ball.

\item A \emph{circle(s) insertion/removal.} We assume that $Y$ is trivial, i.e. it admits a Morse function $f\colon (Y\setminus (C_1 \cup C_2 )) \to \rr$ that is minimal on $\Sigma$, maximal on $S$, vertical near $C_1$ and $C_2$, and with no critical point. Furthermore we assume that there exists a pseudo-gradient for $f$ that matches the two decompositions, except for one circle (i.e.  $\underline{\Sigma}$ contains one circle more/less than $\underline{S}$).

\end{itemize}
\end{defi}

Elementary morphisms generate $\Cob_{1+1+1}$ in the following sense:

\begin{prop}\label{prop:elem_generate} (Existence  of decompositions)
\begin{enumerate}
\item Every 1-morphism in $\Cob_{1+1+1}$ decomposes as a sequence of elementary ones. Moreover, one can find such a sequence of length 2.
\item  Every 2-morphism in $\Cob_{1+1+1}$ decomposes as a sequence of elementary 2-morphism between sequences of elementary 1-morphisms, Moreover such a sequence can also be chosen to have length 2.
\end{enumerate}
\end{prop}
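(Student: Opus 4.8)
The plan is to prove both statements by Morse theory on the cobordism, exactly as one builds a handle decomposition, with care taken near the collar of the corner locus $C_1 \sqcup C_2$. For (1), I would choose a Morse function $f\colon \Sigma \to [0,1]$ with $f^{-1}(0)=C_1$, $f^{-1}(1)=C_2$, equal to the height function on a fixed collar of $\partial\Sigma$. By the Morse rearrangement theorem I may assume every index-$0$ critical point lies below level $\tfrac12$ and every index-$2$ critical point above it, with $\tfrac12$ a regular value. Cutting at $C=f^{-1}(\tfrac12)$ (equipped with an arbitrary parametrisation) writes $\Sigma=\Sigma_0\cup_C\Sigma_1$ as a composite of $\Sigma_0=f^{-1}([0,\tfrac12])$ and $\Sigma_1=f^{-1}([\tfrac12,1])$. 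A closed component of $\Sigma_0$ would be a component of $\Sigma$ disjoint from $C_1\sqcup C$, hence a surface lying entirely below $\tfrac12$ with both a minimum (an index-$0$ point) and a maximum (an index-$2$ point) there, which the arrangement forbids; thus $\Sigma_0$ has no closed component, and symmetrically for $\Sigma_1$. Both pieces are elementary, giving the length-$2$ sequence $(\Sigma_0,\Sigma_1)$, and the same cut works when $\Sigma$ is already elementary.

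For (2), I would first use (1) to fix length-$2$ elementary decompositions $\underline{\Sigma}$ and $\underline{S}$ of the source and target surfaces, then choose a Morse function $F\colon Y\to[0,1]$ with $\Sigma$ as minimum and $S$ as maximum, vertical on a collar of $C_1$ and $C_2$ so that the ``vertical near the boundary'' clause of the elementary $2$-morphisms is satisfied. Ordering the critical points by index produces a vertical factorisation of $Y$ into: a product region, the index-$0$ points (each a $3$-ball, i.e. a $0$-handle), the index-$1$ points (having all the same index, a compression body), the index-$2$ points (a second compression body), the index-$3$ points ($3$-handles), and a final product region. I would then transport a chosen elementary decomposition of each intermediate level surface downward along a pseudo-gradient of $F$, so that both the matching of decompositions and the ``decomposition flows down'' condition hold; the product regions where consecutive decompositions differ by an isolated circle are recorded as circle insertions/removals.

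The length-$2$ refinement is the heart of the matter: it is a relative Heegaard splitting. Making $F$ self-indexing and cutting at the middle level between index $1$ and index $2$ presents $Y$ as the union, along a ``Heegaard'' surface, of a lower piece built from $0$- and $1$-handles and an upper piece which, read from the top, is again built from $0$- and $1$-handles. When $Y$ is connected with $\Sigma,S\neq\emptyset$ one may cancel every index-$0$ handle against a $1$-handle and every index-$3$ handle against a $2$-handle, so that each of the two pieces becomes a genuine compression body (index $1$ below, index $2$ above) and the sequence has length $2$; the remaining degenerate cases are absorbed by the explicit $0$-/$3$-handle and circle-insertion moves.

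The main obstacle is precisely this cancellation of the index-$0$ and index-$3$ critical points: a surviving $0$-handle would create a closed $S^2$ in an intermediate level surface, which is not an elementary $1$-morphism, so one must check either that such handles can always be removed or that they are confined to the allowed $3$-ball moves. Reconciling the strict single-index definition of a compression body with this Heegaard-splitting picture, while keeping the collar/product structure along $C_1$ and $C_2$ intact throughout all the Morse-theoretic moves (rearrangement, self-indexing, cancellation), is where the real care is needed.
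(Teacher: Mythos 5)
Your treatment of the existence statements is correct and is, in spirit, exactly the paper's proof: the paper's entire argument for (2) is the one sentence that one can find self-indexed Morse functions, which is precisely the skeleton you flesh out (order the critical points by index, read off $0$-handles, an index-$1$ compression body, an index-$2$ compression body, $3$-handles, and insert circle insertions/removals to match the decompositions of the intermediate level surfaces). For (1) the paper takes a shortcut: it simply inserts one separating curve in each closed component of $\Sigma$ and groups the resulting pieces into two elementary cobordisms, with no Morse function at all; your cut at a regular level separating the index-$0$ from the index-$2$ critical points achieves the same length-$2$ decomposition with more machinery, and your check that neither half acquires a closed component is correct.

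The genuine problem is the length-$2$ claim in (2), and the obstacle you flag cannot in fact be ``absorbed by the explicit $0$-/$3$-handle and circle-insertion moves'': each of those moves is itself a separate elementary piece, so invoking them increases the length beyond $2$. Worse, the claim as literally stated fails: an elementary $2$-morphism is either a compression body whose critical points all have the same index $1$ or $2$, a single $3$-ball, or a product, so no elementary piece can contain both an index-$0$ and an index-$1$ critical point. Consequently a closed connected $3$-manifold other than $S^3$ (a lens space, say, viewed as a $2$-morphism $\emptyset\Rightarrow\emptyset$) requires at least four elementary pieces --- a $0$-handle, an index-$1$ compression body, an index-$2$ compression body, and a $3$-handle --- and admits no length-$2$ decomposition. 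Your cancellation argument does establish length $2$ exactly when all $0$- and $3$-handles can be cancelled (e.g.\ each component of $Y$ meets both $\Sigma$ and $S$), and that is the honest content of the ``moreover''; the paper's own proof is silent on this point. You should therefore either restrict the length-$2$ assertion to that case, or replace it by a bounded-length normal form ($0$-handles, circle insertions, index-$1$ body, index-$2$ body, circle removals, $3$-handles). For the way the proposition is used downstream --- feeding the Cerf-move uniqueness statement of Proposition~\ref{prop:Cerf_moves} --- only the existence of some elementary decomposition matters, so this weakening is harmless.
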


\begin{proof}
\begin{enumerate}
\item If a 1-morphism $\Sigma$ has closed components, one can just insert a separating curve in each of them, this splits the surface into two elementary 1-morphisms.
\item This follows from the fact that one can find self-indexed Morse functions
\end{enumerate}
\end{proof}

It follows from Cerf theory that decompositions of morphisms of $\Cob_{1+1+1}$ are unique in the following sense:

\begin{prop}\label{prop:Cerf_moves}(Uniqueness of decompositions)
Two decompositions of a morphism of $\Cob_{1+1+1}$ into elementary morphisms can be related by a sequence of moves of the following type.

\underline{For 1-morphisms:} 
\begin{itemize}
\item (Circle(s) insertion/removal) Replacing a length $k$ sequence 
\[
\underline{\Sigma} = (\Sigma_1, {\ldots} ,\Sigma_i, {\ldots} , \Sigma_k)
\] 
by a length $k+1$ one $(\Sigma_1, {\ldots} ,\Sigma_i^1,\Sigma_i^2, {\ldots} , \Sigma_k)$, where $(\Sigma_i^1,\Sigma_i^2)$ are obtained from $\Sigma_i$ by inserting some separating circles. Circle removal is the opposite move.
\end{itemize}

\underline{For 2-morphisms:} Let $\underline{Y} = (Y_0, {\ldots} , Y_k)$ be a sequence of elementary 2-morphisms, with $Y_i$ from $\underline{\Sigma}_i$ to $\underline{\Sigma}_{i+1}$.
\begin{itemize}
\item (Diffeomorphism equivalence) Let $\underline{Y}' = (Y_0', {\ldots} , Y_k')$ be a sequence of elementary 2-morphisms, with $Y_i'$ from $\underline{\Sigma}_i'$ to $\underline{\Sigma}_{i+1}'$, with a family of {diffeomorphisms} $\varphi_i \colon Y_i \to Y_i'$ such that $\varphi_i$ and $\varphi_{i+1}$ coincide on $\underline{\Sigma}_{i+1}$ and send the decomposition $\underline{\Sigma}_{i+1}$ to $\underline{\Sigma}_{i+1}'$. A diffeomorphism equivalence consists in replacing $\underline{Y}$ by $\underline{Y}'$.

\item (Cylinder creation/cancellation)
 Let $\underline{\Sigma} = (\Sigma_1, {\ldots} , \Sigma_k)$ be a sequence of elementary 1-morphisms, with total space $\Sigma = \Sigma_1 \cup \cdots \cup  \Sigma_k$. By a cylinder from $\underline{\Sigma}$ to itself we mean a compression body with no critical points. A cylinder creation corresponds to inserting a cylinder at $\underline{\Sigma}_i$, for some $i$. A cylinder cancellation is the opposite move.

\item (Circle(s) insertion/removal) A circle insertion corresponds to keeping the same 3-manifolds $Y_i$, and performing a circle insertion on one of the $\underline{\Sigma_i}$'s. A circle removal  is the opposite move.

\item (Imbrication of compression bodies) Assume that for some $i$, $Y_i$ and $Y_{i+1}$ are compression bodies of the same index, then their union $Y_i\cup_{\Sigma_{i+1}}Y_{i+1}$ is again a compression body. The move is to replace $\underline{Y}$ by 
\[
(Y_0, {\ldots} ,Y_i\cup_{\Sigma_{i+1}}Y_{i+1} , {\ldots} , Y_k).
\]

\item (Critical point switches) Assume that for some $i$, $Y_i$ and $Y_{i+1}$ correspond to handle attachments along disjoint attaching spheres. The move is to replace $\underline{Y}$ by 
\[
(Y_0, {\ldots} ,\widetilde{Y}_i, \widetilde{Y}_{i+1} , {\ldots} , Y_k),
\]
where $\widetilde{Y}_i$ corresponds to attaching the handles of $Y_{i+1}$ first, and $\widetilde{Y}_{i+1}$ corresponds to attaching the handles of $Y_{i}$ afterwards.

\item (Index 0-1 (or 2-3) handle creation/cancellation). Assume that for some $i$, $Y_i$ is a $0$-handle attachment, with
\begin{align*}
\underline{\Sigma}_i &= (\Sigma_i^1,\Sigma_i^2, {\ldots}, \Sigma_i^k )\text{, and} \\
\underline{\Sigma}_{i+1} &= (D^0, D^1, \Sigma_i^1,\Sigma_i^2, {\ldots}, \Sigma_i^k )\text{,} \\
\end{align*}
where $D^0$ and $D^1$ are two 1-discs such that their union is the 2-sphere bounding the 0-handle. Assume also that $Y_{i+1}$ corresponds to a 1-handle attachment connecting $D^1$ and $\Sigma_i^1$, so that
\[
\underline{\Sigma}_{i+2} = (D^0, D^1 \sharp \Sigma_i^1,\Sigma_i^2, {\ldots}, \Sigma_i^k ).
\]
Assume finally that $Y_{i+2}$ corresponds to the removal of the circle between $D^0$ and $D^1$, so that
\[
\underline{\Sigma}_{i+3} = (D^0\cup ( D^1 \sharp \Sigma_i^1),\Sigma_i^2, {\ldots}, \Sigma_i^k ).
\]
The cancellation move is to replace $(Y_{i},Y_{i+1},Y_{i+2})$ by the cylinder $(Y_{i} \cup Y_{i+1} \cup Y_{i+2})$. The creation move corresponds to the opposite move. The similar moves for 2-3 handles corresponds to the reversed cobordisms $(\overline{Y_{i+2}},\overline{Y_{i+1}},\overline{Y_{i}})$.

\item (Index 1 and 2 handle creation/cancellation) If now for some $i$, $Y_i$ and $Y_{i+1}$ are either compression bodies of index 1 and 2 respectively, and such that their union $Y_i\cup_{\Sigma_{i+1}}Y_{i+1}$ is a cylinder. The move is to replace $\underline{Y}$ by 
\[
(Y_0, {\ldots} ,Y_i\cup_{\Sigma_{i+1}}Y_{i+1} , {\ldots} , Y_k).
\]

\end{itemize}
\end{prop}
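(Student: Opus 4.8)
The plan is to encode each decomposition as Morse-theoretic data on the cobordism and then compare two such data by means of Cerf theory, reading off each elementary Cerf event as one of the listed moves. The two parts of the statement have genuinely different flavours: for $1$-morphisms the decomposition is coarse (the pieces need only have no closed component, not be Morse-elementary), so the argument is combinatorial; for $2$-morphisms the decomposition records a handle structure, so the argument is Cerf-theoretic and is the real content.

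For $1$-morphisms I would reformulate a decomposition of a surface $\Sigma$ from $C_1$ to $C_2$ into elementary pieces as a finite ordered system of disjoint separating multicurves, each separating $C_1$ from $C_2$, which cut $\Sigma$ into the $\Sigma_i$. Given two such systems, I would put them in general position and minimise geometric intersection by the usual innermost-disc and bigon removal arguments (using that each curve separates $C_1$ from $C_2$), after which they can be realised disjointly; their union is then a common refinement of both decompositions. Since passing to a refinement adds separating curves one piece at a time, and each such addition is exactly a circle insertion, any two decompositions are related by circle insertions and removals. This is the easy half.

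For $2$-morphisms I would realise a decomposition of $Y$ into elementary pieces as the data of a Morse function $f\colon Y\setminus(C_1\cup C_2)\to\rr$ that is vertical near the vertical boundary and minimal, resp. maximal, on the incoming, resp. outgoing, surface, together with a gradient-like vector field and a choice of cutting circles on the relevant regular level surfaces: critical points of index $0,1,2,3$ give the handle attachments, regular intervals give compression bodies and cylinders, and the chosen circles give the circle-insertion data. I would then invoke the relative version of Cerf theory adapted to the verticality boundary conditions (the same framework used in \WW's constructions): the space of such boundary-compatible functions is connected, and any two Morse functions are joined by a generic path $f_t$ that is Morse except at finitely many parameters, where it acquires either a single birth--death singularity or a single tangency between two critical values. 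The core of the argument is to match each such event to a move. A crossing of an index $1$ and an index $2$ critical value with disjoint attaching data is a critical point switch; a crossing of two critical values of equal index, or one forcing a reordering of compression bodies, is realised by the imbrication move after merging same-index bodies; a birth--death of an index $(1,2)$ pair in the regular region is the index $1$ and $2$ handle creation/cancellation; and a birth--death of an index $(0,1)$ or $(2,3)$ pair is the corresponding handle creation/cancellation. Changes of the gradient-like field or of the vertical parametrisation that do not alter the critical data are absorbed by diffeomorphism equivalence and cylinder creation/cancellation, while changes of the cutting circles alone are circle insertions and removals on $2$-morphisms. Since every Cerf event is local in both $Y$ and in $t$, it alters only a bounded window $(Y_i,\dots,Y_{i+j})$ of the sequence, leaving the rest fixed, which is precisely the shape of the moves.

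The main obstacle is the bookkeeping around $0$- and $3$-handles. A birth--death introducing an index $(0,1)$ pair creates, transiently, a $2$-sphere component that must be subdivided into elementary (at most one-component) surfaces by inserting a circle; consequently a single Cerf birth--death does not correspond to a single move but to the composite $(Y_i,Y_{i+1},Y_{i+2})$ appearing in the index 0-1 handle cancellation, in which the middle piece is the $1$-handle and the last is a circle removal. The delicate point is to choose the cutting circle on this $2$-sphere compatibly across the birth--death, so that before and after the event the two decompositions genuinely differ only by the stated composite move; once this compatibility is arranged the remaining verifications are routine, and the case of $2$-$3$ handles follows by reversing orientation.
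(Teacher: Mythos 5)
Your treatment of $2$-morphisms follows essentially the same route as the paper, which delegates the statement about the pieces $Y_i$ (ignoring the decompositions of the level surfaces) to standard relative Cerf theory and then deals with the level-surface decompositions separately; your identification of the $0$--$1$ birth--death with the composite move $(Y_i,Y_{i+1},Y_{i+2})$ is exactly what the listed move encodes. The genuine problem is in your argument for $1$-morphisms, which is also the step your $2$-morphism argument falls back on when ``changes of the cutting circles alone'' must be realised by circle insertions and removals. You claim that after innermost-disc and bigon removal the two systems of separating circles ``can be realised disjointly'', so that their union is a common refinement. This is false in general: those arguments only bring two curve systems into minimal position within their isotopy classes, and two separating multicurves can have nonzero essential geometric intersection --- already on a closed genus-$2$ surface (a $1$-morphism from $\emptyset$ to $\emptyset$) two separating curves in different isotopy classes typically intersect in at least four points, so no disjoint realisation, hence no common refinement, exists. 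Moreover, even when the systems can be made disjoint, isotoping one of them produces a \emph{different} decomposition, and isotopy is not among the listed moves for $1$-morphisms, so you would still owe an argument that a decomposition and its isotope are related by circle insertions and removals.

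The fix, which is what the paper does, is to arrange that the two systems never need to coexist: on each closed component of $\Sigma$ choose an auxiliary separating circle disjoint from the circles of \emph{both} decompositions (a small circle bounding a disc will do), insert these auxiliary circles, then remove all circles of the first decomposition one at a time (the auxiliary circles guarantee that no closed component appears at any intermediate stage, so every intermediate sequence is still elementary), then insert all circles of the second decomposition, and finally remove the auxiliary circles. Every step is a circle insertion or removal and no general position or isotopy is required. The same trick, after inserting cylinders, is what matches the level-surface decompositions in the $2$-morphism case; with that replacement the rest of your Cerf-theoretic outline goes through.
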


\begin{proof}
1-morphisms: Let $\underline{\Sigma}$ and $\underline{\Sigma}'$ be two elementary decompositions of a given 1-morphism $\Sigma$. On each closed component of $\Sigma$, pick any separating circle disjoint from the {circles} of the decompositions $\underline{\Sigma}$ and $\underline{\Sigma}'$: call $\underline{C}$ the collection of these circles. One can go from $\underline{\Sigma}$ to $\underline{\Sigma}'$ by 
\begin{itemize}
\item first adding the circles $\underline{C}$,
\item removing all the circles corresponding to  $\underline{\Sigma}$,
\item adding all the circles corresponding to  $\underline{\Sigma}'$,
\item removing the circles $\underline{C}$.
\end{itemize}

2-morphisms: The statement about the pieces $Y_i$, ignoring the decompositions of the level surfaces $\Sigma_i$, is standard Cerf theory, see for example \cite[Th.~2.2.11]{WWfft}. 
To get the correct decompositions $\underline{\Sigma_i}$, one can then insert cylinders and run the same method as above for 1-morphisms.

\end{proof}

We now define a 2-category $\Cob_{1+1+1}^{elem}$, where we keep track of decompositions:
\begin{defi}\label{def:cob_elem} ($\Cob_{1+1+1}^{elem}$) Let $\Cob_{1+1+1}^{elem}$ stand for the 2-category whose:
\begin{itemize}
\item Objects are the same as in $\Cob_{1+1+1}$, 
\item One-morphisms consist in sequences 
\[ 
\underline{\Sigma}= (\Sigma_0, {\ldots} , \Sigma_k)
\]
of elementary 1-morphisms (or equivalently 1-morphism of $\Cob_{1+1+1}$ endowed with a decomposition into elementary morphisms).

\item 2-morphisms from $\underline{\Sigma}$ to $\underline{\Sigma}'$  consist in sequences
\[ 
\underline{Y}= (Y_0, {\ldots} , Y_k), 
\]
where $Y_i$ is an elementary 2-morphism from $\underline{\Sigma}_i$ to $\underline{\Sigma}_{i+1}$, with $\underline{\Sigma}_0 = \underline{\Sigma} $ and $\underline{\Sigma}_{k+1} = \underline{\Sigma}'$.

\item Compositions are given by concatenation.

\end{itemize}
\end{defi}

\subsection{An equivalence relation on $\Hamhat$}
\label{ssec:equiv_rel_hamhat}

Recall that the moduli space associated to the gluing of two surfaces $\Sigma$ and $S$ does not exactly correspond to the composition $\N(\Sigma) \redpr{} \N(S)$, due to the presence of the subset $C$ in Proposition~\ref{prop:gluing_equals_reduction}. Therefore  the construction of Section~\ref{sec:moduli_spaces} does not define a functor with values in $\widehat{\Ham}$, but rather a ``quasi 2-functor'' modulo the  equivalence relation of Definition~\ref{def:equiv_rel_hamhat} below. We will mod out the 1-morphism spaces  by identifying Hamiltonian manifolds up to codimension 3  submanifolds. Care must be taken however at the level of 2-morphisms, since the Lagrangian correspondences we will be considering can always be contained in such codimension 3  submanifolds. In order to define a nontrivial functor, one has to take this fact into account, this is the reason for which we introduce the weak transversality assumption.

\begin{defi}\label{def:weakly_transverse}
Let $A$ and $B$ be two subsets of a topological space $X$. We say that $A$ intersects $B$ in a \emph{weakly transverse} way if $A = \overline{(A\setminus B)}$.
\end{defi}

\begin{defi}\label{def:equiv_rel_hamhat}
Define the following equivalence relation on $\Hamhat$:

\begin{itemize}
\item Objects are equivalent if and only if they are equal.
\item The equivalence relation on $hom^1(G,G')$ is generated by the following identifications: let $\underline{M} = (M_0, {\ldots}, M_k)\in \underline{hom}^1(G,G')$ be a representative of a 1-morphism, and let $C\subset M_i$ be a codimension 3 submanifold. Then we identify $\underline{M}$ with 
\[
(M_0, {\ldots},  M_i \setminus C ,{\ldots}, M_k).
\]

\item On $hom^2(\alpha, \beta)$: suppose that for some representatives $\underline{M}$ and $\underline{N}$ of $\alpha$ and $\beta$ respectively, we have a diagram $\mathcal{D}\in \underline{hom}^2(\underline{M},\underline{N})$. Let $M$ be some 1-morphism decorating an edge in $\mathcal{D}$, and $C\subset M$ be a (coisotropic) codimension 3 submanifold such that for any face in $\mathcal{D}$, decorated by a 2-morphism $L$, which by cyclicity we can think of as an element in $shom^2(\underline{P}, M)$. Suppose then that inside $(\prod{\underline{P}} )\times M$, the intersection of $L$ with $(\prod{\underline{P}}) \times C$ is  weakly transverse, in the sense of Definition~\ref{def:weakly_transverse}. If that transversality assumption holds for any $L$ adjacent to $M$, then we declare to be equivalent the 2-morphisms of $hom^2(\alpha, \beta)$ associated with $\mathcal{D}$ and $\mathcal{D}'$, the new diagram built from $\mathcal{D}$ by replacing $M$ by $M\setminus C$, any adjacent $L$ by $L\setminus \left( (\prod{\underline{P}}) \times C \right) $, and leaving the rest of the diagram unchanged.

\end{itemize}
\end{defi}

\begin{remark} \label{rem:recover_Lagrangians} 
In the setting described above, if one is given the diagram $\mathcal{D}'$ and $M$, then one can recover the diagram $\mathcal{D}$: any adjacent $L$ corresponds to the closure of $L\setminus \left( (\prod{\underline{P}}) \times C \right) $ inside $(\prod{\underline{P}}) \times M$.
\end{remark}

\subsection{Construction on elementary morphisms}
\label{ssec:functor_elem_morphisms}

We now define the quasi 2-functor 
\[
\Cob^{elem}_{1+1+1} \to \Hamhat.
\]

To a 0-morphism $C$, we associate the group $G(C)$ defined in Section~\ref{sec:moduli_space1}.

To an elementary 1-morphism $\Sigma$, we associate the moduli space  $\N(\Sigma)$ defined in Section~\ref{ssec:moduli_space2} (4.2).

If now $\underline{\Sigma} = (\Sigma_1, {\ldots} , \Sigma_k)$ is a sequence of elementary 1-morphisms
\[
\underline{\Sigma} = \xymatrix{ C_0 \ar[r]^{\Sigma_1} & C_1 \ar[r]^{\Sigma_2} & \cdots \ar[r]^{\Sigma_k}& C_k   },
\]
we associate to it the corresponding sequence $\underline{M} = (\N(\Sigma_1), {\ldots} , \N(\Sigma_k))$. 
 
\begin{lemma}\label{lem:indep_param_1mph} As a 1-morphism of $\Hamhat$, the sequence $(\N(\Sigma_1), {\ldots} , \N(\Sigma_k))$ is independent on the choice of parametrizations of the intermediate 1-manifolds $C_1$, ..., $C_{k-1}$. 
\end{lemma}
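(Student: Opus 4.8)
The plan is to show that changing the parametrization of an intermediate circle affects only the two moduli spaces adjacent to it in the sequence, and that on each of these it induces a \emph{canonical} symplectomorphism intertwining the moment maps and the (canonically identified) $G(C_i)$-action; the resulting sequence of simple $1$-morphisms is then canonically isomorphic to the original and hence represents the same $1$-morphism of $\Hamhat$. It suffices to treat a single intermediate manifold $C_i$ (with $1\le i\le k-1$) and, within that, to work one boundary circle at a time, since the parametrization is chosen component by component and $G(C_i)$ is the corresponding product. The space of orientation-preserving parametrizations $\rr/\zz\to C$ of a single oriented circle is connected and deformation retracts onto the choice of basepoint, so any two (for the fixed orientation) differ, up to isotopy, by an orientation-preserving reparametrization fixing the basepoint followed by a change of basepoint; I would treat these two moves separately.

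For the reparametrization move I would use the holonomy description of Remark~\ref{rem:explicit_description}: the space $\N(\Sigma,p)$, its symplectic form $\omega$, its moment map (the values $\theta_i\in\Bg$ with $\exp(\theta_i)$ the holonomy around $\partial_i\Sigma$), and the $G(C_i)$-action are all expressed through holonomies along oriented loops and arcs. Holonomy of a flat connection depends only on the oriented path and not on its speed, and since the tubular neighborhood $\nu\partial\Sigma$ of Definition~\ref{def:extmod} is non-fixed, the boundary normal form $A_{|\nu\partial_i\Sigma}=\theta_i\,ds$ is unaffected. Hence a reparametrization fixing the basepoint leaves $\N(\Sigma,p)$ literally unchanged, together with all of its structure.

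The content is in the basepoint-change move. Moving the basepoint $z_i$ to $z_i'$ along the oriented circle $C_i$ conjugates the holonomies based there, and this realizes a symplectomorphism $\N(\Sigma_i,p)\to\N(\Sigma_i,p')$, and likewise for $\Sigma_{i+1}$. I would check that it preserves $\omega$ and the moment map, since both are intrinsic (the integral pairing and the log-holonomy, respectively). The key fact is that, because $\pi_1(C_i)$ is abelian, the induced identification of $G(C_i)=Hom(\pi_1(C_i),SU(2))$ is path-independent, hence canonical --- this is precisely the observation recorded in Section~\ref{sec:moduli_space1} --- and the conjugation symplectomorphism above is equivariant for it. The same canonical identification of $G(C_i)$ governs the $C_i$-action on both $\N(\Sigma_i)$, where $C_i$ is outgoing, and $\N(\Sigma_{i+1})$, where $C_i$ is incoming, so the two identifications are automatically compatible on the shared circle.

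Assembling, changing the parametrization of $C_i$ replaces $(\dots,\N(\Sigma_i),\N(\Sigma_{i+1}),\dots)$ by the sequence obtained by applying canonical, moment-map-preserving, $G(C_i)$-equivariant symplectomorphisms to the two adjacent terms while fixing all others; iterating over $C_1,\dots,C_{k-1}$ yields the claim. The main obstacle is the basepoint-change step: verifying that the conjugation map genuinely preserves $\omega$ and $\mu$ and is equivariant, and confirming that the canonical identification of $G(C_i)$ matches on the outgoing side (through $\Sigma_i$) and the incoming side (through $\Sigma_{i+1}$) so that the two identifications glue consistently. On the formal side one must also note that such a termwise canonical, equivariant symplectomorphism of simple $1$-morphisms may be treated as an identification in $\Hamhat$, so that the two sequences define the same $1$-morphism.
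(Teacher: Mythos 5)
There is a genuine gap, and it sits exactly where you flag it as a "formal note": the claim that a termwise canonical, equivariant, moment-map-preserving symplectomorphism of simple $1$-morphisms "may be treated as an identification in $\Hamhat$." The definition of the completion (definition~\ref{def:completion}) gives no such move: $hom^1(x,y)$ is the quotient of $\underline{hom}^1(x,y)$ by the equivalence relation generated \emph{only} by compositions/decompositions (and by deleting identities). Two sequences whose entries are isomorphic but not equal as simple $1$-morphisms of $\Ham$ are a priori distinct elements of $hom^1(x,y)$; nothing in the construction identifies them. So your geometric analysis (holonomy invariance under speed reparametrization, conjugation under basepoint change, canonicity of the identification of $G(C_i)$ from abelianness of $\pi_1(C_i)$) establishes an isomorphism of sequences, but not the equality of $1$-morphisms that the lemma asserts.

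The paper closes this gap with a device you do not have: it inserts the cylinder $C_i\times[0,1]$, parametrizing $\lbrace 0\rbrace\times C_i$ by $p_i$ and $\lbrace 1\rbrace\times C_i$ by $p_i'$, so that $\N(C_i\times[0,1],p_i,p_i')$ becomes a simple $1$-morphism from $G(C_i,p_i)$ to $G(C_i,p_i')$. Composing this cylinder piece into its left neighbor produces the sequence with $\N(\Sigma_i,p_i)$ replaced by $\N(\Sigma_i,p_i')$ (and all subsequent groups read with $p_i'$), while composing it into its right neighbor recovers the sequence with $p_i$ throughout. Both length-$k$ sequences are therefore compositions of one and the same length-$(k+1)$ sequence, hence equal in $hom^1$ by the very relation that defines the completion. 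Your canonical-symplectomorphism computations are not wasted --- they are essentially what one needs to verify that these two compositions of the cylinder are the claimed moduli spaces --- but without the cylinder (or some other way of exhibiting the two sequences as compositions of a common refinement) the argument does not reach the conclusion.
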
 

\begin{proof}

Let $p_i, p_i '\colon \rr/\zz \sqcup \cdots \sqcup \rr/\zz \to C_i$ be two parametrizations of $C_i$, with $1\leq i\leq k-1$. Insert a cylinder $ [0,1] \times C_i$ between $\Sigma_i$ and $\Sigma_{i+1}$, and parametrize $\lbrace 0 \rbrace \times C_i$ and $\lbrace 1 \rbrace \times C_i$ by $p_i$ and $p_i'$ respectively, so to have a sequence
\[
\xymatrix{\cdots \ar[r]^{\Sigma_i} & (C_i,p_i) \ar[rr]^{C_i\times [0,1]} & & (C_i,p_i')  \ar[r]^{\Sigma_{i+1}} & \cdots
}
\]
and an associated sequence in $\Hamhat$:
\[
\xymatrix{\cdots \ar[r]^{\N(\Sigma_i,p_i)} & G(C_i,p_i) \ar[rr]^{N(C_i\times [0,1], p_i,p_i ')} &  & G(C_i,p_i ')  \ar[r]^{\N(\Sigma_{i+1},p_i ')} & \cdots
}
\]
composing respectively at $G(C,p_0)$ and $G(C,p_1)$, we obtain that the two sequences
\[
\xymatrix{\cdots \ar[r]^{\N(\Sigma_i,p_i)} & G(C_i,p_i)  \ar[r]^{\N(\Sigma_{i+1},p_i)} & \cdots \\
\cdots \ar[r]^{\N(\Sigma_i,p_i ')} & G(C_i,p_i ')  \ar[r]^{\N(\Sigma_{i+1},p_i ')} & \cdots 
}
\]
define the same morphism of $\Hamhat$.

\end{proof}

To an elementary 2-morphism we will associate a diagram of Lagrangian correspondences. Assume first that $Y$ is a compact 3-manifold with boundary $\partial^1 Y$ and codimension 2 corner 
\[
\partial^2 Y = C_1 \sqcup \cdots \sqcup C_k \subset \partial^1 Y,
\]
 and let $\Sigma$ be the compact surface with boundary obtained by cutting $\partial^1 Y$ along $\partial^2 Y$. We parametrize each circle $C_i$ in $\partial^2 Y$, and take the induced parametrization of $\partial \Sigma$.

\begin{defi}\label{def:lagcorr_3mfds} Let $Y$ and $\Sigma$ be as above, 
\begin{itemize}
\item (Moduli space associated to $Y$) Let $\N(Y) = \A_F (Y) / \Gc(Y)$, where
\[
\A_F (Y) = \left\lbrace A\in \Omega^1(Y)\otimes \g  \ |\ F_A=0, A_{|\nu C_i} = \theta_i ds \right\rbrace,
\]
with $\theta_i \in \Bg$, $\nu C_i$ a non-fixed neighborhood of $C_i$, and $s\in \rr/\zz$ the parameter of $C_i$; and
\[
\Gc (Y) = \left\lbrace u\colon Y\to SU(2)  \ |\ u_{|\nu C_i} = 1 \right\rbrace.
\]
\item (Correspondence associated to $Y$) Let $L(Y)\subset \N(\Sigma)$ denote the image of $\N(Y)$ by the restriction map to $\Sigma$.
\end{itemize}
\end{defi}

For arbitrary $Y$, $L(Y)$ might not be smooth. However, when this is the case, Stokes formula implies that the map $\N(Y) \to \N(\Sigma)$ is Lagrangian. Indeed, tangent vectors at $[A]$ of $\N(Y)$ can be represented by classes of $\g$-valued 1-forms that are $d_A$-closed, and locally constant near $\partial^2 Y$. For two such forms $\alpha, \beta$,
\[
\omega([\alpha], [\beta]) = \int_{\Sigma}{\left\langle \alpha \wedge \beta\right\rangle } =  \int_{Y}{d\left\langle \alpha \wedge \beta\right\rangle } =0.
\]
We will see that $L(Y)$ is a smooth embedded Lagrangian for 0-handle attachments, and for \emph{some} compression bodies and circle insertions.

\vspace{.3cm}
\paragraph{\underline{0-handle attachments}} In this case, $Y$ is a 3-ball, and $\Sigma$ consists in a disjoint union of punctured spheres $\Sigma_i$. For each component, by the explicit description in Remark~\ref{rem:explicit_description}, $\N(\Sigma_i)$ is identified with an open subset of $\Bg^{k_i -1} \times SU(2)^{k_i -1}$, where $k_i$ stands for the number of boundary components. Since all the boundary circles bound discs in $Y$, under the previous identification, $L(Y)$ corresponds to the subset
\[
\prod_i{ \left\lbrace 0 \right\rbrace  \times SU(2)^{k_i -1}} \subset \prod_i{\Bg^{k_i -1} \times SU(2)^{k_i -1}},
\]
as the holonomy along the $\gamma$-curves joining the boundaries can take arbitrary values. It follows that $L(Y)$ is smooth, and therefore Lagrangian.

\vspace{.3cm}
\paragraph{\underline{Circle insertion}} Assume now that $Y\colon \underline{\Sigma} \to \underline{S}$ corresponds to a circle insertion as defined in Definition~\ref{def:cob_elem}, and assume moreover that $\underline{\Sigma}$ has length two, and $\underline{S}$ length one, i.e.
\begin{align*}
\underline{\Sigma} &= \xymatrix{C_0 \ar[r]^{\Sigma_1} & C_1 \ar[r]^{\Sigma_2} & C_2  }\text{, and} \\
\underline{S} &= \xymatrix{C_0 \ar[r]^{S} & C_2  },
\end{align*}
with $S\simeq \Sigma_1\cup_{C_1}\Sigma_2$. From Proposition~\ref{prop:gluing_equals_reduction}, we have the identification
\[
\N(S)\setminus C = \N(\Sigma_1) \redpr{G(C_1)} \N(\Sigma_2),
\]
 under which $L(Y)$ corresponds to the identification 2-morphism 
 \[
 I_{G(C_1)}(\N(\Sigma_1), \N(\Sigma_2)),
 \]
  modulo the codimension 3 subset $C$. Indeed, if $A_1$ and $A_2$ are flat {connections} on $\Sigma_1$ and $\Sigma_2$  that extend flatly to $Y$, then they must coincide on $C_1$, which corresponds to the condition of being in the zero level of the moment map for the diagonal $G(C_1)$-action. And since $A$ is flat and $Y\setminus \left( C_0 \cup C_2 \right)$ is a trivial cobordims, the restriction $A_{|S}$ must be gauge equivalent to the gluing of $A_1$ and $A_2$ on $\Sigma_1\cup_{C_1}\Sigma_2$. It follows from the explicit description of the moduli spaces that $0$ is a regular value of the moment maps (and therefore the diagonal moment map), and that the $G(C_1)$-action is free. This imply that $I_{G(C_1)}(\N(\Sigma_1), \N(\Sigma_2))$, and therefore $L(Y)$, are smooth.

\vspace{.3cm}
\paragraph{\underline{2-handle attachmnents}} Assume that $Y\colon \Sigma\to S$ corresponds to 2-handle attachments on $\Sigma$ which we assume to be connected (but $S$ may be disconnected, yet with no closed components). We will show that $L(Y)$ is induced by a fibered coisotropic submanifold on $\N(\Sigma)$, in the following sense:

\begin{defi}\label{def:fibered_coisotropic} A coisotropic submanifold $C\subset M$ of a symplectic manifold $(M,\omega)$ is said to be \emph{fibered} if its characteristic foliation $TC^{\bot_\omega} \subset TC$ corresponds to the vertical foliation $\ker dp$ of a fibration $p\colon C\to B$. In this case, $\omega$ induces a symplectic form on $B$, and $L = (i\times p)(C) \subset M^-\times B$ is a Lagrangian correspondence.
We will say that a Lagrangian correspondence $\Lambda\subset M^-\times M'$ is \emph{induced} by $C$ if for some symplectomorphism  $B\simeq M'$, $\Lambda$ corresponds to $L$.
\end{defi}

From the following proposition, it is enough to consider the case when $Y$ is a single handle attachment.

\begin{prop}\label{prop:compo_fibered_coisotropics} Let $M_0$, $M_1$, $M_2$ be three symplectic manifolds, and $L_{01}\subset M_0^-\times M_1$, $L_{12}\subset M_1^-\times M_2$ be Lagrangian correspondences induced by fibered coisotropics $C_{01} \subset M_0$, and $C_{12} \subset M_1$. Then the composition of $L_{01}$ with $L_{12}$ is embedded, and is induced by the fibered coisotropic $C_{02} = p_{01}^{-1}(C_{12})$, where $p_{01} \colon C_{01}\to M_1$ denotes the fibration.
\end{prop}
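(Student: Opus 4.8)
The plan is to compute the geometric composition set-theoretically, observe that the fibration structure makes the composition automatically transverse and embedded, and then recognize the result as the correspondence induced by $C_{02}$.

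First I would fix notation: write $p_{01}\colon C_{01}\to M_1$ and $p_{12}\colon C_{12}\to M_2$ for the two fibrations, so that by Definition~\ref{def:fibered_coisotropic}
\[
L_{01} = \{(c, p_{01}(c)) : c\in C_{01}\}\subset M_0^-\times M_1,\qquad L_{12} = \{(c', p_{12}(c')) : c'\in C_{12}\}\subset M_1^-\times M_2,
\]
and $\omega_{M_0}|_{C_{01}} = p_{01}^*\omega_{M_1}$, $\omega_{M_1}|_{C_{12}} = p_{12}^*\omega_{M_2}$. A point $(m_0,m_2)$ lies in the composition $L_{01}\circ L_{12}$ iff there is $m_1$ with $m_0\in C_{01}$, $m_1 = p_{01}(m_0)\in C_{12}$ and $m_2 = p_{12}(m_1)$. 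Since $C_{02} = p_{01}^{-1}(C_{12})\subset C_{01}$, this says exactly $m_0\in C_{02}$ and $m_2 = p_{02}(m_0)$, where $p_{02} = p_{12}\circ (p_{01}|_{C_{02}})\colon C_{02}\to M_2$. Hence, set-theoretically,
\[
L_{01}\circ L_{12} = \{(c, p_{02}(c)) : c\in C_{02}\} = (i\times p_{02})(C_{02}),
\]
with $i\colon C_{02}\hookrightarrow M_0$ the inclusion.

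Next I would justify that this composition is smooth and embedded. The fibre product $L_{01}\times_{M_1}L_{12}$ is the intersection of $L_{01}\times L_{12}$ with $M_0\times\Delta_{M_1}\times M_2$; its transversality amounts to the transversality of the two maps to $M_1$, namely $p_{01}\colon L_{01}\cong C_{01}\to M_1$ and the inclusion $C_{12}\hookrightarrow M_1$. Because $p_{01}$ is a submersion this transversality is automatic, so the fibre product is a smooth manifold, and the computation above identifies it with $C_{02}=p_{01}^{-1}(C_{12})$. The projection to $M_0^-\times M_2$ is $c\mapsto (i(c), p_{02}(c))$; since $i$ is an embedding this is an injective immersion onto its image, so the composition is embedded and therefore a Lagrangian submanifold of $M_0^-\times M_2$.

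It remains to prove that $C_{02}$ is a fibered coisotropic of $M_0$ with fibration $p_{02}$, so that $L_{01}\circ L_{12}$ is, by definition, the correspondence induced by $C_{02}$. I expect this to be the main technical point, and I would establish it by a linear coisotropic-reduction argument at each $c\in C_{02}$, with $m_1 = p_{01}(c)$. Writing $K = \ker dp_{01}|_c = (T_cC_{01})^{\omega_{M_0}}$, the reduction identifies $T_cC_{01}/K$ with $T_{m_1}M_1$ symplectically via $dp_{01}$, and $T_cC_{02} = (dp_{01})^{-1}(T_{m_1}C_{12})$ is the preimage of the coisotropic subspace $T_{m_1}C_{12}$. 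A standard computation---using that $\omega_{M_0}$ descends to $T_cC_{01}/K$ and that $T_cC_{02}\supset K$---gives $(T_cC_{02})^{\omega_{M_0}} = (dp_{01}|_{T_cC_{01}})^{-1}\big((T_{m_1}C_{12})^{\omega_{M_1}}\big)$, which is contained in $T_cC_{02}$ because $C_{12}$ is coisotropic; hence $C_{02}$ is coisotropic. Moreover $(T_{m_1}C_{12})^{\omega_{M_1}} = \ker dp_{12}|_{m_1}$, so this characteristic subspace equals $(dp_{01}|_{T_cC_{01}})^{-1}(\ker dp_{12}|_{m_1}) = \ker dp_{02}|_c$; thus the characteristic foliation of $C_{02}$ is the vertical foliation of $p_{02}$. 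Finally $p_{02}$ is a fibration, being the composite of the fibration $p_{01}|_{C_{02}}\colon C_{02}\to C_{12}$ (the restriction of the bundle $p_{01}$ over the submanifold $C_{12}$) with the fibration $p_{12}$. With $C_{02}$ shown to be fibered coisotropic and inducing precisely the correspondence computed in the first step, the proof is complete. The delicate point throughout is the linear coisotropic-reduction identity for $(T_cC_{02})^{\omega_{M_0}}$; the embeddedness, by contrast, comes for free from the submersion property of $p_{01}$.
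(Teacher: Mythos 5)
Your proof is correct and follows the same route as the paper's: compute the fibre product set-theoretically, check transversality (which you justify cleanly via the submersion property of $p_{01}$, where the paper simply differentiates and records the resulting tangent-space identity), and recognize the image as $(i_{02}\times p_{02})(C_{02})$. You additionally verify, by a linear coisotropic-reduction argument, that $C_{02}$ really is a fibered coisotropic with characteristic foliation $\ker dp_{02}$ --- a point the paper's proof leaves entirely implicit --- so your argument is, if anything, more complete than the one in the text.
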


\begin{proof}
Let $(x_0, x_1, x_2)\in L_{01}\times M_1 \cap M_0 \times L_{12}$, we have $x_0\in C_{02}$, $x_1 = p_{01}(x_0)$, and  $x_2 = p_{02}(x_0)$, with 
\[
p_{02} = p_{12} \circ p_{01} \colon C_{02} \to M_2 .
\]
 It follow by differentiating that 
\begin{align*}
&\left( D_{x_0}{(i_{01}\times p_{01})}(T_{x_0}C_{01}) \times T_{x_2}M_2\right)  \cap \left( T_{x_0}M_0 \times D_{x_1}{(i_{12}\times p_{12})}(T_{x_1}C_{12})\right)  \\
&=D_{x_0}(i_{02}\times p_{01} \times p_{02})(  T_{x_0}C_{02}),
\end{align*}
and $L_{01} \circ L_{12} = (i_{02}\times p_{02})(C_{02})$.
\end{proof}

Assume now that $\Sigma$ is connected, and $Y\colon \Sigma \Rightarrow S$ corresponds to a single 2-handle attachment. Assume first that the attaching circle is separating in $\Sigma$, and cuts it in two surfaces $\Sigma_1$ and $\Sigma_2$. Denote $C_1^1$, ..., $C_{k_1}^1$ and $C_1^2$, ..., $C_{k_2}^2$ the components of $\partial \Sigma$ contained respectively in $\Sigma_1$ and $\Sigma_2$. By assumption, $k_1, k_2 \geq 1$, otherwise $S$ would have a closed component. Each of these circles have a basepoint, corresponding to the image of $[0]\in\rr/\zz$. Pick disjoint embedded paths $\gamma_2^1$, ..., $\gamma_{k_1}^1$ and $\gamma_2^2$, ..., $\gamma_{k_2}^2$ joining the basepoint of $C_1^1$ (resp. $C_1^2$) to the other boundary components. Connect also $C_1^1$ and $C_1^2$ by a path $\delta$ disjoint from the other curves, and meeting the attaching circle at one point. Fix finally $\alpha_1^i$, $\beta_1^i$, ..., $\alpha_{g_i}^i$, $\beta_{g_i}^i$ a symplectic basis of the fundamental group of $\Sigma_i \setminus \left(\delta \cup\gamma_2^1 \cup \cdots \cup \gamma_{k_1}^1  \right)$ based at (the basepoint of) $C_1^i$. See Figure~\ref{fig:separating_curve}. These curves flow down to analogous curves on $S$.

\begin{figure}[!h]
    \centering
    \def\svgwidth{.6\textwidth}
\begingroup%
  \makeatletter%
  \providecommand\color[2][]{%
    \errmessage{(Inkscape) Color is used for the text in Inkscape, but the package 'color.sty' is not loaded}%
    \renewcommand\color[2][]{}%
  }%
  \providecommand\transparent[1]{%
    \errmessage{(Inkscape) Transparency is used (non-zero) for the text in Inkscape, but the package 'transparent.sty' is not loaded}%
    \renewcommand\transparent[1]{}%
  }%
  \providecommand\rotatebox[2]{#2}%
  \newcommand*\fsize{\dimexpr\f@size pt\relax}%
  \newcommand*\lineheight[1]{\fontsize{\fsize}{#1\fsize}\selectfont}%
  \ifx\svgwidth\undefined%
    \setlength{\unitlength}{759bp}%
    \ifx\svgscale\undefined%
      \relax%
    \else%
      \setlength{\unitlength}{\unitlength * \real{\svgscale}}%
    \fi%
  \else%
    \setlength{\unitlength}{\svgwidth}%
  \fi%
  \global\let\svgwidth\undefined%
  \global\let\svgscale\undefined%
  \makeatother%
  \begin{picture}(1,0.49407115)%
    \lineheight{1}%
    \setlength\tabcolsep{0pt}%
    \put(0,0){\includegraphics[width=\unitlength]{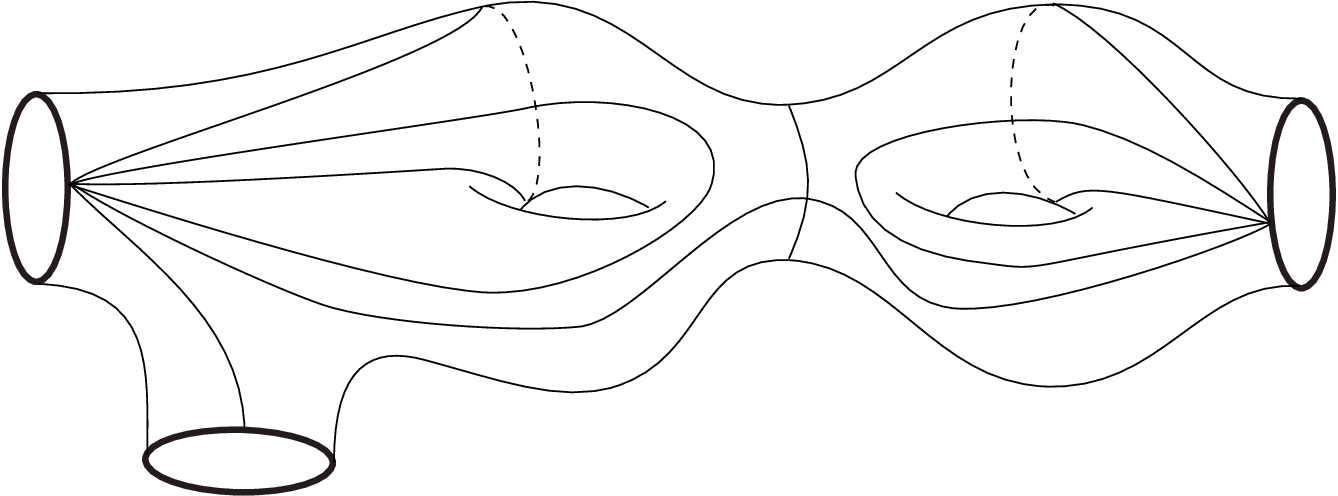}}%
    \put(0.30181969,0.40299134){\color[rgb]{0,0,0}\makebox(0,0)[lt]{\lineheight{1.25}\smash{\begin{tabular}[t]{l}$\Sigma_1$\end{tabular}}}}%
    \put(0.70228179,0.40612832){\color[rgb]{0,0,0}\makebox(0,0)[lt]{\lineheight{1.25}\smash{\begin{tabular}[t]{l}$\Sigma_2$\end{tabular}}}}%
  \end{picture}%
\endgroup%

      \caption{Handle attachment along a separating curve.}
      \label{fig:separating_curve}
\end{figure}

Let $\theta^i_j\in \Bg$ denoting the values of a {connection} at $C_j^i$, and $A_j^i$, $B_j^i$, $\Gamma_j^i$ the holonomies along the corresponding curves, $\Delta$ the holonomy along $\delta$, and let
\begin{align*}
 \Pi_1  &= \prod_{i=1}^{g_1}{[A_i^1,  B_i^1]} \prod_{i=2}^{k_1}{ad_{\Gamma_i^1} } e^{\theta_i^1}, \\
 \Pi_2  &= \prod_{i=1}^{g_2}{[A_i^2,  B_i^2]} \prod_{i=2}^{k_2}{ad_{\Gamma_i^2} } e^{\theta_i^2},
\end{align*}
corresponding to the holonomies along a loop going aroung the attaching circle.  The spaces $\N(\Sigma)$ and $\N(S)$ admit the following description:
\begin{align*}
\N(\Sigma) &= \left\lbrace \left( A_1^1,  B_1^1, {\ldots}, A_{g_1}^1,  B_{g_1}^1, \Gamma_2^1, {\ldots} , \Gamma_{k_1}^1,  \theta_2^1, {\ldots} , \theta_{k_1}^1, \right. \right. \\
& \left. A_1^2,  B_1^2, {\ldots}, A_{g_1}^2,  B_{g_1}^2, \Gamma_2^2, {\ldots} , \Gamma_{k_1}^2,  \theta_1^2, \theta_2^2, {\ldots} , \theta_{k_1}^2, \Delta \right)\ | \\
&\left. \Pi_1 \Delta \Pi_2 e^{\theta_1^2}  \Delta^{-1} \neq -1 \right\rbrace, 
\end{align*} 

\begin{align*}
\N(S) &= \left\lbrace \left( A_1^1,  B_1^1, {\ldots}, A_{g_1}^1,  B_{g_1}^1, \Gamma_2^1, {\ldots} , \Gamma_{k_1}^1,  \theta_2^1, {\ldots} , \theta_{k_1}^1, \right. \right. \\
& \left. A_1^2,  B_1^2, {\ldots}, A_{g_1}^2,  B_{g_1}^2, \Gamma_2^2, {\ldots} , \Gamma_{k_1}^2,   \theta_2^2, {\ldots} , \theta_{k_1}^2 \right)\ | \\
&\left. \Pi_1 \neq -1,\  \Pi_2  \neq -1 \right\rbrace, 
\end{align*} 
(we have dropped $\theta_1^2$ and $\Delta$ in $\N(S)$).  With $C = \left\lbrace \Pi_1=1 \right\rbrace \subset \N(\Sigma)$, $C$ is fibered over $\N(S)$, where $p\colon C\to \N(S)$ is given by forgetting $\Delta$ and $\theta_1^2$ (indeed, $C\simeq SU(2) \times \N(S)$, where $\Delta$  corresponds to the $SU(2)$-factor), and $L(Y)$ is the correspondence induced by $C$ (which is coisotropic, since $L(Y)$ is Lagrangian).

The case when the attaching circle is nonseparating is similar, and can be described in an analogous way by assuming that the attaching circle is one of the $\alpha$-{curves}, see \cite[Sec.~5.2.1]{surgery}.

If now $Y\colon \Sigma \Rightarrow S$ is an arbitrary handle attachment, one can decompose it in single handle attachments. By Proposition~\ref{prop:compo_fibered_coisotropics} and the above discussion, $L(Y)$ is again induced by a fibered coisotropic, which corresponds to the set of {connections} having trivial holonomies along the attaching circles.

\begin{remark} If $Y$ is a more general circle insertion or handle attachment, i.e. the sequences $\underline{\Sigma}$ and $\underline{S}$ can be longer, we have not checked whether or not $L(Y)$ is smooth, but a priori there can be issues such as in Remark~\ref{rem:horiz_2comp_ham}. This is the reason why in these cases we associate more complicated diagrams, containing only correspondences as above.
\end{remark}

\vspace{.3cm}
\paragraph{\underline{General circle insertions}}

Assume that $Y\colon \underline{\Sigma} \Rightarrow \underline{S}$, with 
\begin{align*}
\underline{\Sigma} &= (\Sigma_1, {\ldots}, \Sigma_i, \Sigma_{i+1}, {\ldots}, \Sigma_k ) \text{, and} \\
\underline{S} &= (\Sigma_1, {\ldots}, \Sigma_i \cup \Sigma_{i+1}, {\ldots}, \Sigma_k ) ,
\end{align*}
 corresponds to removing the circle between $\Sigma_i$ and $\Sigma_{i+1}$. To $Y$ we associate the diagram $\mathcal{D}(Y)$: 
\[
\begin{tikzcd} 
 & & & . \ar[Rightarrow,d,"L", start anchor={[xshift=0ex, yshift=-2ex]}, end anchor={[xshift=0ex, yshift=-2ex]}] \ar[rd, bend left=20, "\N(\Sigma_{i+1})"]& & & \\
 . \ar[r, "\N(\Sigma_{1})"] &  \cdots \ar[r] & . \ar[ru, bend left=20, , "\N(\Sigma_{i})"]  \ar[rr,bend right=40,"\N(\Sigma_{i}\cup\Sigma_{i+1})"{name=H,swap}]   &  \  & . \ar[r] & \cdots  \ar[r, "\N(\Sigma_{k})"] & .
\end{tikzcd},
\]
where $L$ is the circle removal correspondence defined above (and  corresponding to the identification 2-morphism).

\vspace{.3cm}
\paragraph{\underline{General 2-handle attachmnents}}

  Assume now that $Y\colon \underline{\Sigma} \Rightarrow \underline{S}$, is a compression body, and let  
\[
\underline{M} = \xymatrix{ G_0 \ar[r]^{M_1}& G_1 \ar[r]^{M_2}& \cdots \ar[r]^{M_k}& G_k. }
\]  
\[
\underline{N} = \xymatrix{ G_0 \ar[r]^{N_1}& G_1 \ar[r]^{N_2}& \cdots \ar[r]^{N_k}& G_k. }
\] 
be the sequences corresponding respectively to $\underline{\Sigma}$ and $\underline{S}$. Associate then to $Y$ the following diagram $\mathcal{D}(Y)$: 
 \[
\xymatrix{
   G_0 \rtwocell^{M_1}_{N_1}{L_1} & G_1 \rtwocell^{M_2}_{N_2}{L_2} & G_2 \rtwocell^{}_{}{} &\cdots \rtwocell^{}_{}{} & G_{k-1} \rtwocell^{M_k}_{N_k}{L_k} & G_k
},
\]
where $L_i = L(\widetilde{Y}_i)$ stands for the correspondence of Definition~\ref{def:lagcorr_3mfds}, with $Y_i\subset Y$ the piece flowing from $\Sigma_i$ to $S_i$, and $\widetilde{Y}_i$ obtained by modding out  the vertical tubes (i.e. the tubes flowing between the circles of the decompositions $\underline{\Sigma}$ and $\underline{S}$) by the gradient flow of the Morse function, i.e. collapsing them to circles.
\begin{lemma}(Independence of functions/pseudo-gradient)
\label{lem:indep_functions_pseudograd}
The diagram  $\mathcal{D}(Y)$, as a 2-morphism of $\Hamhat$, is independent on the choice of Morse functions and pseudo-gradients.
\end{lemma}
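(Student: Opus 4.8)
The plan is to compare the diagrams $\mathcal{D}(Y)$ produced by two choices $(f_0,\xi_0)$ and $(f_1,\xi_1)$ of Morse function and pseudo-gradient by connecting them through a generic path $(f_t,\xi_t)$ preserving the prescribed behaviour near $C_1$, $C_2$ and the \emph{fixed} decompositions $\underline{\Sigma}$ and $\underline{S}$. By Cerf theory (exactly as in the proof of Proposition~\ref{prop:Cerf_moves}), such a path crosses only finitely many bifurcations, each realizing one of the $2$-morphism moves of Proposition~\ref{prop:Cerf_moves} that keeps $Y$ a single compression body with the given endpoints: critical-point switches, isotopies and handle slides of the attaching spheres, and the cylinder insertions needed to re-match the level decompositions. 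Since the source and target $1$-morphisms $\underline{M}$ and $\underline{N}$ are determined by $\underline{\Sigma}$ and $\underline{S}$ (so the objects $G_i$ and the number of faces are fixed), it suffices to show that each such move leaves the class of $\mathcal{D}(Y)$ in $hom^2(\underline{M},\underline{N})$ unchanged.

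The geometric input is the description of the faces obtained in the analysis of $2$-handle attachments: each $L_i=L(\widetilde{Y}_i)$ is induced, in the sense of Definition~\ref{def:fibered_coisotropic}, by the fibered coisotropic $C_i\subset \N(\Sigma_i)$ cut out by the condition that the holonomy be trivial along every attaching circle contained in $\Sigma_i$. Two observations make this robust. First, triviality of holonomy around a loop is conjugation-invariant, so $C_i$ depends only on the free homotopy classes of the attaching circles; hence isotopies of the attaching spheres, and \emph{a fortiori} critical-point switches internal to a piece (which do not change $\widetilde{Y}_i$ up to diffeomorphism rel boundary), leave every $L_i$ unchanged. Second, if the holonomy is trivial along circles $a$ and $b$ then it is automatically trivial along any band sum of $a$ with a parallel copy of $b$; consequently a handle slide internal to $\Sigma_i$ replaces its attaching link by one cutting out the \emph{same} coisotropic $C_i$. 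Thus every move confined to a single piece acts trivially on $\mathcal{D}(Y)$.

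The remaining, and main, obstacle is a handle slide whose band crosses a decomposition circle $C_i$, moving an attaching circle from $\Sigma_{i+1}$ into $\Sigma_i$ and altering the two adjacent faces $L_i$ and $L_{i+1}$ simultaneously. To treat it I would first insert a cylinder over $C_i$, as in the proof of Lemma~\ref{lem:indep_param_1mph}, so that the slide takes place in the cylindrical region. By the gluing-equals-reduction principle (Proposition~\ref{prop:gluing_equals_reduction}) one has $\N(\Sigma_i\cup_{C_i}\Sigma_{i+1})\setminus C = \N(\Sigma_i)\redpr{G(C_i)}\N(\Sigma_{i+1})$, and the same holonomy computation that identifies each single face shows that the combined piece $\widetilde{Y_i\cup_{C_i}Y_{i+1}}$ is induced by the single fibered coisotropic of trivial holonomy along \emph{all} attaching circles of $Y_i\cup_{C_i}Y_{i+1}$ — the composition of the two fibered coisotropics across $G(C_i)$, in the spirit of Proposition~\ref{prop:compo_fibered_coisotropics}. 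The band-sum invariance of the previous paragraph shows this combined coisotropic is unchanged by the slide, so the two presentations agree after passing to the glued piece and decomposing back along $C_i$ via the identification $2$-morphism. The only point requiring care is that this passage is an equality only away from the codimension-$3$ locus $C$ where the holonomy around $C_i$ equals $-1$; there the two diagrams differ precisely on such a locus, and they are identified by the equivalence relation of Definition~\ref{def:equiv_rel_hamhat}, the weak transversality hypothesis being satisfied because each correspondence involved is the closure of its complement in that locus. Assembling these verifications over the finitely many bifurcations of the path shows that $\mathcal{D}(Y)$, as a $2$-morphism of $\Hamhat$, does not depend on the choice of $(f,\xi)$.
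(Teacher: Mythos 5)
Your overall strategy is the same as the paper's: the faces $L_i$ are determined by the attaching circles via the ``trivial holonomy'' coisotropics, and one checks invariance under the moves (isotopies, handle slides, critical point switches) relating two handle presentations. You actually supply more detail than the paper does at two points: the band-sum computation showing that a handle slide does not change the coisotropic (the paper simply asserts that handle slides ``have no effect on $L_i$''), and the discussion of a slide interacting with a decomposition circle. On the latter, note that since the decompositions $\underline{\Sigma}$ and $\underline{S}$ are \emph{fixed} and required to flow into each other, the attaching circles stay disjoint from the decomposition circles throughout any admissible family, so the cross-circle case you spend your third paragraph on should not actually arise; the paper instead disposes of the piece-assignment issue by the Euler-characteristic remark that the number of critical points in each $Y_i$ is determined by $\underline{\Sigma}$ and $\underline{S}$.

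There is, however, one dependence on the pseudo-gradient that your argument does not address and the paper's does: the \emph{vertical tubes}. The pieces $\widetilde{Y}_i$ are obtained by collapsing the tubes swept out by the decomposition circles, and a different pseudo-gradient can twist these tubes, changing the induced identification between the boundary circles of $\Sigma_i$ and those of $S_i$ and hence changing each individual correspondence $L_i = L(\widetilde{Y}_i)$ --- even when the attaching circles are untouched. Your Cerf-path setup is blind to this, because a path of pseudo-gradients produces no bifurcation of the handle structure while still ending at a twisted identification. The fix is exactly the cylinder-insertion argument of Lemma~\ref{lem:indep_param_1mph}, which you already invoke elsewhere: the individual $L_i$ change, but the diagram as a whole does not. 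Adding that step would close the gap.
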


\begin{proof}
First, the choice of a different pseudo-gradient can have the effect of twisting the vertical tubes, which might have the effect of changing the correspondences $L_i$, but not the whole diagram, as can be shown by a similar argument as in Lemma~\ref{lem:indep_param_1mph}.

Observe then that since they all are of the same index, the number of critical points on each piece $Y_i$ is determined by the decomposition of $\Sigma$ and $S$.

Notice finally that the Lagrangian correspondence only depend on the attaching circles. And they change just by isotopies or handleslides, which has no effect on $L_i$.

\end{proof}

\subsection{Cerf moves invariance}
\label{ssec:Cerf_moves_invariance}

We now prove that two decompositions of a given cobordism yield equivalent morphisms of $\Hamhat$, in the sense of Definition~\ref{def:equiv_rel_hamhat}. Before starting, we point out that the weak transversality assumption will always be satisfied in our case.  Indeed, $L$ and $(\prod{\underline{P}}) \times C$ will always be irreducible real algebraic affine varieties, therefore the only way that $L$ could fail to intersect $(\prod{\underline{P}}) \times C$ weakly transversely would be to be entirely contained in it, which is never the case, since in the holonomy descriptions, $L$ is defined by equations such as $\mathrm{Hol}_\gamma A = 1$ for some curves $\gamma$, while $C$ is defined by equations such as $\mathrm{Hol}_\gamma A = -1$: the trivial representation is always contained in $L$, but not in $C$.

At the level of 1-morphisms, the circle insertion/removal invariance follows from Proposition~\ref{prop:gluing_equals_reduction} and the definition of the equivalence relation.

We now check the moves for 2-morphisms.

\vspace{.3cm}
\paragraph{\underline{Diffeomorphism equivalence}} Follows from the fact that a diffeomorphism of surfaces induces a symplectomorphism on moduli spaces, that preserves the Hamiltonian actions, and a diffeomorphism of 3-manifolds maps Lagrangian correspondences to Lagrangian correspondences.

\vspace{.3cm}
\paragraph{\underline{Cylinder creation/cancellation}} Follows from the fact that the diagram associated to a cylinder is an identity in $\Hamhat$.

\vspace{.3cm}
\paragraph{\underline{Circle insertion/removal}} Follows from the definition of the equivalence relation in $\Hamhat$ (Definition~\ref{def:equiv_rel_hamhat}), and the fact that the diagram associated to a circle removal is an identification 2-morphism, modulo this relation.

\vspace{.3cm}
\paragraph{\underline{Imbrication of compression bodies}} Follows immediately from Proposition~\ref{prop:compo_fibered_coisotropics}.

\vspace{.3cm}
\paragraph{\underline{Critical point switches}} There are several cases to be distinguished, depending on the index of the critical points.

When both index are equal to either 1 or 2, this is a special case of imbrication of compression bodies. 

When one of the critical points has index either 0 or 3, its attaching sphere (or attaching belt) is a sphere (with some circles in it) and in particular a whole connected component of the total space of $\underline{\Sigma}$. It follows that the diagrams associated to the handle attachments correspond to attaching  2-cells to the sequence $\underline{M}$ associated to $\underline{S}$ in a way that does not overlap. Therefore the order of attachment doesn't matter.

It remains to check the case when the indexes are 1 and 2.  

Let $S_1, S_2 \subset \underline{\Sigma}$ denote respectively a 0-{sphere} and a 1-sphere, disjoint from each other. If $S_1$ and $S_2$ lie in different components of the sequence $\underline{\Sigma}$, invariance follows from the same reasons as in the previous case.
If this is not the case, one can insert circles separating them  (by circle insertion invariance), so that they lie on distinct components.

\vspace{.3cm}
\paragraph{\underline{Index 0-1 (and 2-3) handle cancellation}} Let $(Y_i, Y_{i+1}, Y_{i+2})$ be as in Proposition~\ref{prop:Cerf_moves}. First, $\N(D^0)\simeq\N(D^1)\simeq pt$, and the diagram associated to $Y_i$ consists in inserting the correspondence $pt\subset \N(D^0)\times \N(D^1)$. Observe then that $\N(\Sigma_i^1)$ and $\N(D^0 \cup (D^1 \sharp \Sigma_i^1))$ are identified, and both correspond to the symplectic quotient of $\N(D^1 \sharp \Sigma_i^1)$, by the $SU(2)$-action on $\partial D^1$. The diagrams associated to $Y_{i+1}$ and $Y_{i+2}$ both correspond (modulo the equivalence relation of Definition~\ref{def:equiv_rel_hamhat}) to inserting the identification 2-morphism for this reduction. It follows that the diagram associated to $(Y_i, Y_{i+1}, Y_{i+2})$ is an identity, modulo the equivalence relation. 

Reversing the diagram, we obtain the index 2-3 cancellation move.

\vspace{.3cm}
\paragraph{\underline{Index 1-2 handle cancellation}}
Assume that $Y_i$ and $Y_{i+1}$ correspond to a handle cancellation pair, i.e. (the opposite of) $Y_i$ and $Y_{i+1}$ correspond to 2-handle attachments along two closed curves that intersect transversely  at a single point. We can assume that these curves correspond respectively to $\alpha_1$ and $\beta_1$, with $\alpha_1$,  $\beta_1$, ..., $\alpha_g$, $\beta_g$ a symplectic basis of the fundamental group of the intermediate surface $\Sigma$ (with the $\gamma$ curves removed). 

These two curves define two coisotropic submanifolds of $\N(\Sigma)$ 
\begin{align*}
C_1 &=\lbrace A_1 =1\rbrace\text{, and} \\
C_2 &=\lbrace B_1 =1\rbrace 
\end{align*}
that induce respectively $L(Y_1)^T$ and $L(Y_2)$. Since they intersect transversely, it follows that their composition is embedded, and corresponds to $L(Y_i\cup Y_{i+1})$, i.e. the diagonal correspondence.

\begin{remark}\label{rem:MooreTachikawa}
At dimensions $(1+1)$, our construction is formally similar with Moore-Tachikawa's TQFTs \cite{MooreTachikawa}. Here are the main differences between the two constructions:
\begin{itemize}
\item In \cite{MooreTachikawa}, objects in the target category $HS$ are complex algebraic groups, and 1-morphisms are holomorphic symplectic varieties with !hamiltonian actions, while in $\Ham$ we consider compact real Lie groups and smooth (real) symplectic manifolds. Notice that some versions of instanton homology for $SL(2, \cc)$ have been defined in \cite{AbouzaidManolescu,CoteManolescu}, for these versions one can expect a similar $(1+1+1)$-TQFT structure taking values in a category closer to $HS$.
\item One of the requirements in \cite{MooreTachikawa} is a strict glueing equals reduction formula for glueing surfaces, while in our construction such a formula only holds up to a codimension 3 submanifold.
\item Finally, in \cite{MooreTachikawa} it is required that the 2-disc is sent to $G\times \mathcal{S}$, with $\mathcal{S}$ a slodowy slice, and that the cylinder $S^1\times [0,1]$ is sent to $T^*G$. This is not the case in our construction, as these surfaces are sent respectively to the point, and to an open subset of $T^*G$.
\end{itemize}
\end{remark}

\section{Future directions}\label{sec:future_dir}

We now outline some directions we plan to take in the future. 

\subsection{Existence of a (non-quasi) 2-functor, quasi-Hamiltonian analogue}\label{ssec:qHam}

By using the spaces $\N(\Sigma)$, we were able to construct a quasi 2-functor. A natural question follows:

\begin{question} Is it possible to replace the spaces $\N(\Sigma)$ in our construction by suitable Hamiltonian manifolds, satisfying a gluing equals reduction principle in a strict sense (i.e. without $C$ in Proposition~\ref{prop:gluing_equals_reduction}), so to obtain a 2-functor $\Cob_{1+1+1} \to \Hamhat$ that still assigns to a closed surface a sequence whose composition is the $SU(2)$-character variety?
\end{question}

We believe this question can be answered partially positively, at least in two ways. First, by using the quasi-Hamiltonian spaces constructed in \cite{AMM_q-ham}. These spaces satisfy a gluing equals reduction principle, but their moment map takes values in the group, rather than its Lie algebra. This would lead to the definition of an analogous partial 2-category $\qHam$. However, it seems not obvious to define Floer homology in this setting, since these are not symplectic manifolds.

Another possible solution would be to use the infinite dimensional moduli spaces $\M(\Sigma)$ introduced by Donaldson \cite{DonaldsonBoundary_value}: these are moduli spaces of flat {connections} on $\Sigma$, but the restriction to the boundary may not be constant, and defines a map $A_{|\partial \Sigma}\colon \partial \Sigma \to \mathfrak{g}$. This moduli space is acted on by the gauge group of $\partial \Sigma$, which identifies with $k$ copies of the loop group $LSU(2)$.  

\subsection{Extension to dimension zero}\label{ssec:dim_zero} Since the former spaces $\M(\Sigma)$ are infinite dimensional, Floer homology seems also difficult to define in this setting. However, it seems possible to use these spaces in order to define a theory extended to dimension zero: to a closed interval $I$ one can associate the path group $\G(I) = Map(I, SU(2))$, which comes with an evaluation map to the boundary of $I$: $ev\colon \G(I) \to SU(2)^2$. One can use these evaluation maps to glue the groups: if $S^1 = I\cup J$ is a decomposition of the circle into two intervals, one has 
\[
\G(S^1) = \G(I) \times_{ev} \G(J).
\]
This suggests the definition of a partial 3-category, where objects would be finite dimensional Lie groups, 1-morphisms Banach Lie groups together with morphisms similar with these evaluation maps, 2-morphisms Banach Hamiltonian manifolds, and 3-morphisms $\G$-Lagrangian correspondences.

\subsection{Extension to dimension four}\label{ssec:dim_four} The aim of this project is to promote $\Hamhat$, or at least the pre-completion $\underline{\Ham}$, to a 3-category, and extend the quasi 2-functor defined here to a quasi 3-functor from $\Cob_{1+1+1+1}$ (or a version enriched with cohomology classes).

If $G$ and $G'$ are objects of $\Ham$, $\underline{M},\underline{M'} \in \underline{hom}^1(G,G')$, and $\mathcal{D},\mathcal{D}'\in  \underline{hom}^2(\underline{M},\underline{M'})$, one would like to define a 3-morphism space $hom^3(\mathcal{D},\mathcal{D}')$ using equivariant Floer homology (or the chain complex defining it). Indeed, in such a situation, let
\begin{align*}
 \M &=\prod_{M}{M^-\times M}, \\
 \mathscr{L}_0&=\prod_{M}{\Delta_M}, \\
 \mathscr{L}_1&=\prod_{L}{L}, \\
 \G&=\prod_{G}{G} ,
\end{align*}
where $M$, $L$ and $G$ run respectively in the set of symplectic manifolds, Lagrangian multi-correspondences and  Lie groups appearing in the diagram $\mathcal{D} \sharp_{\underline{M}\sharp\underline{M'}}\mathcal{D}'$. One would then take 
\[
hom^3(\mathcal{D},\mathcal{D}') = CF_\G (\mathscr{L}_0, \mathscr{L}_1).
\]
Several constructions of such chain complexes appeared in the literature, for example \cite{Frauenfelder,HLSlie}, or a construction outlined in \cite{DaemiFukaya}. We plan to define another version relying on \WW's quilt theory, that should be well suited for our purposes. The resulting algebraic structure should be a 3-category analogue of Bottman and Carmeli's $(A_\infty,2)$-categories \cite{BottmanCarmeli}.

\subsection{Invariants for knots and sutured manifolds}\label{ssec:knots_sutures}

The framework developped here should be well-suited for defining invariants of knots, and more generally sutured manifolds, similar with the ones in Heegaard-Floer theory, since a sutured manifold can be viewed as a 2-morphism in $\Cob_{1+1+1}$. After applying the functor, one gets a 2-morphism in $\Hamhat$, which can be horizontally composed with the coadjoint orbit $\left\lbrace \theta \in \mathfrak{g}\ |\ \abs{\theta} = \frac{\pi}{2} \right\rbrace$. That precisely corresponds to putting a traceless condition on the holonomy of a {connection} around a meridian of a knot, as in for example \cite{KMknots}.

\subsection{Relation with Seiberg-Witten theory}\label{ssec:rel_SW}
The ideas in this section emerged during a conversation with Guangbo Xu. Denote 
\[
F_{Don}\colon \Cob_{1+1+1} \dashrightarrow \Hamhat
\]
the quasi 2-functor defined in this paper. One should be able to define a similar quasi 2-functor 
\[
F_{SW}\colon \Cob_{1+1+1} \dashrightarrow \Hamhat
\]
 that would correspond to Seiberg-Witten theory, using extended moduli spaces of vortices analogous to the spaces $\N(\Sigma)$. 

Following Witten's conjecture, these two theories should be related: one can expect that there is a natural transformation 
\[
T\colon F_{Don} \to F_{SW},
\]
i.e. to any $k$-morphism $W\colon X\to Y$ in $\Cob_{1+1+1}$, should correspond a $(k+1)$-morphism in $\Hamhat$ relating $F_{Don}(W)$ and $F_{SW}(W)$ as in the following diagram: 
\[
\xymatrix{ F_{Don}(X)\ar[dd]^{T(X)} \ar[rr]^{F_{Don}(W)} & & F_{Don}(Y) \ar[dd]^{T(Y)} \\
& \Downarrow T(W) & \\
F_{SW}(X) \ar[rr]^{F_{SW}(W)} & & F_{SW}(Y) .
}
\]

One can think of $T(W)$ as being associated with the $(k+1)$-morphism $W\times [0,\infty]$, where the $[0,\infty]$ coordinate could correspond to a wavelength. We plan to build $F_{SW}$ and $T$ in a future joint work, building on the work of Feehan and Leness \cite{FeehanLeness1,FeehanLeness2}.

\subsection{Extension to higher dimensional gauge theory}\label{ssec:rel_Haydys}

In \cite{DonaldsonThomas}, Donaldson and Thomas pave the way towards generalizations of Donaldson-Floer theory in dimensions 2,3 and 4 to higher dimensions: one in complex geometry, another one for manifolds with special holonomy. 
Their starting point is the so called Donaldson-Thomas invariant of a Calabi-Yau 3-fold,  a holomorphic analogue of the Casson invariant (which is the Euler characteristics of Instanton Homology). These new theories share some formal similarities with the low-dimensional one. However, some new serious technical difficulties arise, making a full implementation of it a hard challenge. In particular, compactness problems are considerably more delicate.

Nevertheless, for the complex geometry generalization, this theory was successfully implemented via algebraic geometry \cite{Thomas,JoyceSong,KontsevichSoibelman}, in algebraic settings. In the special holonomy setting, such techniques are not available, therefore a Floer theory would be desirable.  In \cite{DonaldsonSegal}, Donaldson and Segal lay out some foundations towards  such a theory. In particular, they suggest (in \cite[Section 3.3]{DonaldsonSegal})  that to a Calabi-Yau 3-fold should correspond a Fukaya category associated with a certain moduli space of Hermite-Einstein connections.  These moduli spaces are not smooth in general, but are locally critical loci \cite{JoyceSong}.

Haydys \cite{Haydys} refined the expectations in the case when the 6-manifold corresponds to a twisted spinor bundle over a Riemannian 3-manifold: he posits that the right object to consider would be a Fukaya-Seidel category associated to the moduli space of complexified connections, endowed with the complex Chern-Simons functional. Moreover, he outlines what could then be a corresponding Field theory in dimensions (3+1+1) in this setting, involving the Vafa-Witten and the Haydys-Witten equations in dimensions 4 and 5 respectively. He does so by giving a new definition of Fukaya-Seidel categories that makes the relation with these equations more transparent. Progress in a similar direction has been made by Wang \cite{wang2021monopoles} in the Seiberg-Witten setting.

Then, a natural question is what algebraic structures would one observe when doing a further dimensional reduction (which, at the level of equations, corresponds to an analog of Nahm's equations). Would one land in a category similar to $\Ham$ (where maybe 1-morphisms would consist in Lefschetz fibrations with fiber-preserving Hamiltonian actions)? In other words, can one associate  to 3-manifolds with boundary some moduli spaces that would play a role similar to the extended moduli spaces $\N(\Sigma)$? Equipped with some Hamiltonian group actions and satisfying some glueing equals reduction principle? At least for simple enough elementary 3-cobordisms, would these be better behaved than those associated with a closed 3-manifold? Would it then be possible to adapt Haydys construction of the Fukaya-Seidel category to an equivariant setting?

\bibliographystyle{alpha}
\bibliography{biblio}

\def\cprime{$'$}
\begin{thebibliography}{AMM98}

\bibitem[AM20]{AbouzaidManolescu}
Mohammed Abouzaid and Ciprian Manolescu.
\newblock A sheaf-theoretic model for {${\rm SL}(2,\Bbb C)$} {F}loer homology.
\newblock {\em J. Eur. Math. Soc. (JEMS)}, 22(11):3641--3695, 2020.

\bibitem[AMM98]{AMM_q-ham}
Anton Alekseev, Anton Malkin, and Eckhard Meinrenken.
\newblock Lie group valued moment maps.
\newblock {\em J. Differential Geom.}, 48(3):445--495, 1998.

\bibitem[BC21]{BottmanCarmeli}
Nathaniel Bottman and Shachar Carmeli.
\newblock {$(A_\infty,2)$}-categories and relative 2-operads.
\newblock {\em High. Struct.}, 5(1):401--421, 2021.

\bibitem[BD95a]{BaezDolan_cob_hyp}
John~C. Baez and James Dolan.
\newblock Higher-dimensional algebra and topological quantum field theory.
\newblock {\em J. Math. Phys.}, 36(11):6073--6105, 1995.

\bibitem[BD95b]{BraamDonaldsonSurgery}
Peter~J. Braam and Simon~K. Donaldson.
\newblock Floer's work on instanton homology, knots and surgery.
\newblock In {\em The {F}loer memorial volume}, volume 133 of {\em Progr.
  Math.}, pages 195--256. Birkh\"{a}user, Basel, 1995.

\bibitem[BD95c]{BraamDonaldsonGluing}
Peter~J. Braam and Simon~K. Donaldson.
\newblock Fukaya-{F}loer homology and gluing formulae for polynomial
  invariants.
\newblock In {\em The {F}loer memorial volume}, volume 133 of {\em Progr.
  Math.}, pages 257--281. Birkh\"{a}user, Basel, 1995.

\bibitem[Caz]{monoidal}
Guillem Cazassus.
\newblock A monoidal structure on the {H}amiltonian 2-category.
\newblock {\em in preparation}.

\bibitem[Caz19]{surgery}
Guillem Cazassus.
\newblock Symplectic instanton homology: twisting, connected sums, and {D}ehn
  surgery.
\newblock {\em J. Symplectic Geom.}, 17(1):93--177, 2019.

\bibitem[CM19]{CoteManolescu}
Laurent C\^{o}t\'{e} and Ciprian Manolescu.
\newblock A sheaf-theoretic {${\rm SL}(2, \Bbb C)$} {F}loer homology for knots.
\newblock {\em Proc. Lond. Math. Soc. (3)}, 119(5):1336--1387, 2019.

\bibitem[DF18]{DaemiFukaya}
Aliakbar Daemi and Kenji Fukaya.
\newblock Atiyah-{F}loer conjecture: a formulation, a strategy of proof and
  generalizations.
\newblock {\em Modern geometry: a celebration of the work of {S}imon
  {D}onaldson}, 99:23--57, 2018.

\bibitem[Don90]{Donaldsonpoly}
Simon~K. Donaldson.
\newblock Polynomial invariants for smooth four-manifolds.
\newblock {\em Topology}, 29(3):257--315, 1990.

\bibitem[Don92]{DonaldsonBoundary_value}
Simon~K. Donaldson.
\newblock Boundary value problems for {Y}ang-{M}ills fields.
\newblock {\em J. Geom. Phys.}, 8(1-4):89--122, 1992.

\bibitem[DS11]{DonaldsonSegal}
Simon Donaldson and Ed~Segal.
\newblock Gauge theory in higher dimensions, {II}.
\newblock In {\em Surveys in differential geometry. {V}olume {XVI}. {G}eometry
  of special holonomy and related topics}, volume~16 of {\em Surv. Differ.
  Geom.}, pages 1--41. Int. Press, Somerville, MA, 2011.

\bibitem[DT98]{DonaldsonThomas}
Simon~K. Donaldson and Richard~P. Thomas.
\newblock Gauge theory in higher dimensions.
\newblock In {\em The geometric universe ({O}xford, 1996)}, pages 31--47.
  Oxford Univ. Press, Oxford, 1998.

\bibitem[FL01a]{FeehanLeness1}
Paul M.~N. Feehan and Thomas~G. Leness.
\newblock {$\rm PU(2)$} monopoles and links of top-level {S}eiberg-{W}itten
  moduli spaces.
\newblock {\em J. Reine Angew. Math.}, 538:57--133, 2001.

\bibitem[FL01b]{FeehanLeness2}
Paul M.~N. Feehan and Thomas~G. Leness.
\newblock {$\rm PU(2)$} monopoles. {II}. {T}op-level {S}eiberg-{W}itten moduli
  spaces and {W}itten's conjecture in low degrees.
\newblock {\em J. Reine Angew. Math.}, 538:135--212, 2001.

\bibitem[Fra04]{Frauenfelder}
Urs Frauenfelder.
\newblock The {A}rnold-{G}ivental conjecture and moment {F}loer homology.
\newblock {\em Int. Math. Res. Not.}, (42):2179--2269, 2004.

\bibitem[Hay15]{Haydys}
Andriy Haydys.
\newblock Fukaya-{S}eidel category and gauge theory.
\newblock {\em J. Symplectic Geom.}, 13(1):151--207, 2015.

\bibitem[HLS20]{HLSlie}
Kristen Hendricks, Robert Lipshitz, and Sucharit Sarkar.
\newblock A simplicial construction of {$G$}-equivariant {F}loer homology.
\newblock {\em Proc. Lond. Math. Soc. (3)}, 121(6):1798--1866, 2020.

\bibitem[Jef94]{jeffrey}
Lisa~C. Jeffrey.
\newblock Extended moduli spaces of flat connections on {R}iemann surfaces.
\newblock {\em Math. Ann.}, 298(4):667--692, 1994.

\bibitem[JS12]{JoyceSong}
Dominic Joyce and Yinan Song.
\newblock A theory of generalized {D}onaldson-{T}homas invariants.
\newblock {\em Mem. Amer. Math. Soc.}, 217(1020):iv+199, 2012.

\bibitem[KM95]{kronheimer1995embedded}
P.~B. Kronheimer and T.~S. Mrowka.
\newblock Embedded surfaces and the structure of {D}onaldson's polynomial
  invariants.
\newblock {\em J. Differential Geom.}, 41(3):573--734, 1995.

\bibitem[KM11]{KMknots}
Peter~B. Kronheimer and Tomasz~S. Mrowka.
\newblock Knot homology groups from instantons.
\newblock {\em J. Topol.}, 4(4):835--918, 2011.

\bibitem[KS08]{KontsevichSoibelman}
Maxim Kontsevich and Yan Soibelman.
\newblock Stability structures, motivic {D}onaldson-{T}homas invariants and
  cluster transformations, 2008.

\bibitem[Lur09]{Lurie_cob_hyp}
Jacob Lurie.
\newblock On the classification of topological field theories.
\newblock In {\em Current developments in mathematics, 2008}, pages 129--280.
  Int. Press, Somerville, MA, 2009.

\bibitem[MMR94]{MorganMrowkaRuberman}
John~W. Morgan, Tomasz Mrowka, and Daniel Ruberman.
\newblock {\em The {$L^2$}-moduli space and a vanishing theorem for {D}onaldson
  polynomial invariants}.
\newblock Monographs in Geometry and Topology, II. International Press,
  Cambridge, MA, 1994.

\bibitem[Mro88]{mrowka1988local}
Tomasz~S. Mrowka.
\newblock {\em A local Mayer-Vietoris principle for Yang-Mills moduli spaces}.
\newblock PhD thesis, University of California, Berkeley, 1988.

\bibitem[MT12]{MooreTachikawa}
Gregory~W Moore and Yuji Tachikawa.
\newblock On 2d {TQFT}s whose values are holomorphic symplectic varieties.
\newblock In {\em Proc. Symp. Pure Math}, volume~85, page 191, 2012.

\bibitem[MW12]{MW}
Ciprian Manolescu and Christopher Woodward.
\newblock Floer homology on the extended moduli space.
\newblock In {\em Perspectives in analysis, geometry, and topology}, volume 296
  of {\em Progr. Math.}, pages 283--329. Birkh\"auser/Springer, New York, 2012.

\bibitem[Nad17]{Nadler_Arboreal_singularities}
David Nadler.
\newblock Arboreal singularities.
\newblock {\em Geom. Topol.}, 21(2):1231--1274, 2017.

\bibitem[Tho00]{Thomas}
Richard~P. Thomas.
\newblock A holomorphic {C}asson invariant for {C}alabi-{Y}au 3-folds, and
  bundles on {$K3$} fibrations.
\newblock {\em J. Differential Geom.}, 54(2):367--438, 2000.

\bibitem[Wan21]{wang2021monopoles}
Donghao Wang.
\newblock Monopoles and {L}andau-{G}inzburg models {I}, 2021.

\bibitem[Weh16]{Wehrheimphilo}
Katrin Wehrheim.
\newblock Floer field philosophy.
\newblock {\em Advances in the mathematical sciences}, 6:3--90, 2016.

\bibitem[Wei81]{Weinstein_sg}
Alan Weinstein.
\newblock Symplectic geometry.
\newblock {\em Bull. Amer. Math. Soc. (N.S.)}, 5(1):1--13, 1981.

\bibitem[Wei10]{Weinstein_Symplectic_categories}
Alan Weinstein.
\newblock Symplectic categories.
\newblock {\em Port. Math.}, 67(2):261--278, 2010.

\bibitem[WW15]{WWffttangles}
Katrin Wehrheim and Chris Woodward.
\newblock Floer field theory for tangles.
\newblock {\em arXiv preprint arXiv:1503.07615}, 2015.

\bibitem[WW20]{WWfft}
Katrin Wehrheim and Chris Woodward.
\newblock Floer field theory for coprime rank and degree.
\newblock {\em Indiana Univ. Math. J.}, 69(6):2035--2088, 2020.

\end{thebibliography}

\end{document}